\documentclass[twoside,a4paper,10pt]{amsart}
\usepackage{amssymb}
\usepackage{amsmath}
\usepackage{graphicx}
\usepackage[ruled,vlined]{algorithm2e}
\usepackage{xcolor}
\usepackage{cancel}

\newtheorem{theorem}{Theorem}
\newtheorem{definition}{Definition}

\newtheorem{remark}[theorem]{Remark}

\newcommand{\inte }{{\rm int}\,}

\newcommand{\amin}{a_{\mathrm{min}}}
\newcommand{\amax}{a_{\mathrm{max}}}
\newcommand{\xmin}{x_{\mathrm{min}}}
\newcommand{\xmax}{x_{\mathrm{max}}}
\newcommand{\zmin}{z_{\mathrm{min}}}
\newcommand{\zmax}{z_{\mathrm{max}}}

\newcommand{\cover}[1]{\stackrel{#1}{\Longrightarrow}}

\begin{document}
\title[A mechanism for growth of topological entropy]{A mechanism for growth of topological entropy and global changes of the shape of chaotic attractors}

\author{Daniel Wilczak}
\email{Daniel.Wilczak@uj.edu.pl}
\address{Faculty of Mathematics and Computer Science.
	Jagiellonian University. \L ojasiewicza 6, 30-348 Krak\'ow, Poland.
}

\author{Sergio Serrano}
\email{sserrano@unizar.es}
\address{Departamento de Matem\'atica Aplicada and IUMA.
	University of Zaragoza. E-50009. Spain.\\ CODY. University of Zaragoza.
	E-50009. Spain.}

\author{Roberto Barrio}
\email{rbarrio@unizar.es}
\address{Departamento de Matem\'atica Aplicada and IUMA.
	University of Zaragoza. E-50009. Spain.\\ CODY. University of Zaragoza.
	E-50009. Spain.}

\date{\today}
\begin{abstract}
The theoretical and numerical understanding of the key concept of topological entropy is an important problem in dynamical systems. Most studies have been carried out on maps (discrete-time systems).
We analyse a scenario of global changes of the structure of an attractor in continuous-time systems leading to an unbounded growth of the topological entropy of the underlying dynamical system. As an example, we consider the classical R\"ossler system. We show that for an explicit range of parameters a chaotic attractor exists. We also prove the existence of a sequence of bifurcations leading to the growth of the topological entropy. The proofs are computer-aided.

{\textbf{Keywords:}} Computer-assisted proof, Topological entropy, Chaotic attractors, R\"ossler system
\end{abstract}

\maketitle

\section{Introduction.}

The concept of topological entropy was first introduced by Adler, Konheim and McAndrew~\cite{Ent}, and it is one of the most important topological invariants in dynamical systems theory because it is one of the possible ways to measure the complexity of dynamics.
It expresses the exponential growth rate of the number of distinguishable orbits the system can create under iteration and it is  widely used in the theoretical description of the transition to chaos. The changes in topological entropy with the parameter of the system indicate bifurcations, which affect global orbit structure. As  Milnor~\cite{Milnor02} points out, it is natural to ask whether topological entropy can be calculated efficiently. Although in most cases it is impossible to compute the entropy exactly~\cite{GHRS20,Milnor02}, in many situations it can be bounded from below, for instance by proving (semi) conjugacy to a shift dynamics. Obviously, there are many open questions in the study of dynamical systems, and the behaviour and proof of the values of the topological entropy on different systems is one of them.

The aim of this paper is to present an algorithmic approach to prove the existence of global changes of the shape of attractors in continuous-time systems when a parameter of the system is varying. As a paradigmatic example we consider the R\"ossler system~\cite{ROSSLER1976397}
\begin{equation}\label{eq:rossler}
	\dot x = -(y+z),\qquad \dot y = x+ay,\qquad \dot z = b + z(x-c).
\end{equation}
In the literature this model has been extensively studied~\cite{BBS09,BBS14,BBSS11,BBS12,ROSSLER1976397,Z}, showing different types of chaotic attractors and changes in the first return maps.

We will prove that for a range of parameter values the attractor exists and it undergoes bifurcations leading to global changes of its shape. Although the exact value of the entropy for the R\"ossler system is unlikely to compute, we will rigorously estimate the entropy from below in different subintervals of systems' parameter and we will show that this lower bound is growing with the parameter of the system. We will also prove that there is a sequence of saddle-node bifurcations, which give rise to semiconjugacy of certain Poincar\'e map to the Bernoulli shift on $2$ up to $13$ symbols, depending on parameter range. 

The main problem is that to provide an analytical proof of these changes is simply not possible, but for very simple models. Therefore, in order to consider a classical and seminal model, we will use powerful machinery of  so-called \emph{validated numerics} \cite{Neumaier_1991,Moore,TuckerBook}.

In the last decades efficient algorithms for validated integration of finite dimensional ODEs have been proposed \cite{NedialkovJackson1998,NedialkovJacksonCorliss1999,NedialkovJacksonPryce2001,Nedialkov2006,RauhBrillGunter2009,capd,C1Lohner,C1HO,CnLohner}. Even if the explicit solutions to an ODE cannot be computed exactly, one can often computed bounds on the flow \cite{capd} or Poincar\'e maps \cite{poincare} and their derivatives. Using these bounds we can check if the solutions satisfy certain inequalities and thus extract partial information of the dynamics. The field of computer-assisted techniques \cite{CAPINSKI2023106998} is currently a very active area~\cite{CapinskiFleurantinJames2020,BARTHA2015339,Capinski2012,CyrankaWanner,GalanteKaloshin,cadiot2024rigorous,DLT,BARRIO201280,WSB16} as it provides rigorous numerical techniques to obtain proofs for different phenomena observed just numerically including the solution to the Smale's 14th problem \cite{Tucker2002} about the existence and non-uniform hyperbolicity of the Lorenz attractor.

The paper is organized as follows. In Section~\ref{sec:sin_model} we present a one-dimensional scenario that leads to global changes of an invariant set. In Section~\ref{sec:numbif} we present result of biparametric numerical analysis of the R\"ossler system (\ref{eq:rossler}). In Section~\ref{sec:rossler} the main results about the existence of attractor in the R\"ossler system and changes of its structure are given. These are Theorem~\ref{thm:attractor}, Theorem~\ref{thm:template}, Theorem~\ref{thm:bifurcation} and Theorem~\ref{thm:entropy}. In Section~\ref{sec:CAP} we give computer-assisted proofs of the above main results.

\section{Toy model -- the sine map}\label{sec:sin_model}
\begin{figure}[htb]
	\centerline{\includegraphics[width=\textwidth]{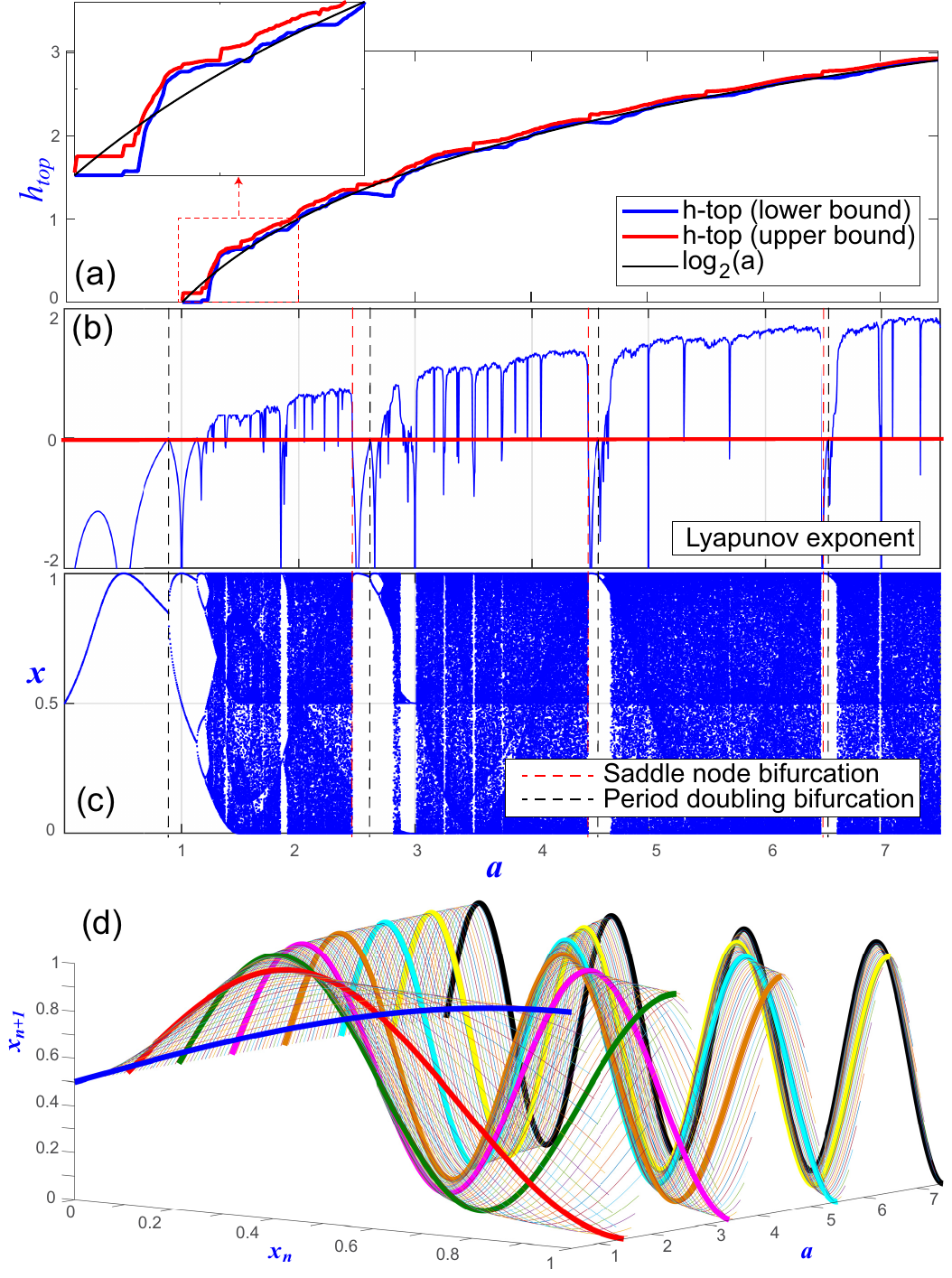}}
	\caption{(a): Lower and upper bounds  for the values of the topological entropy on the range $a \in [0, 7.5]$ and the value of $\log_2(a)$. (b) Lyapunov exponent of the map (\ref{eq:SinModel}). (c): bifurcation diagram. (d): 3D plot of the function map (\ref{eq:SinModel}) depending on the parameter $a$. \label{fig:cos_map}}
\end{figure}

In order to present geometry of the mechanism that leads to growth of the entropy, let us consider the following one-dimensional toy model
\begin{equation}\label{eq:SinModel}
	f_a(x) = \frac{1}{2}\left(1+\sin(a\pi x)\right), \qquad \qquad a\in\mathbb R.
\end{equation}
Clearly the interval $I=[0,1]$ is a forward invariant set for any parameter value $a\in\mathbb R$.
In Figure~\ref{fig:cos_map} we present on the top (plot (a)) lower and upper bounds for the values of the topological entropy (calculated using the algorithm described in \cite{GoraBoyarsky}) on the range $a \in [0,\, 7.5]$. The upper bound is computed as $\frac{1}{k}\log_2 c_k$ for some large $k$, where $c_k$ is the number of monotone slopes of $f_a^k$. We observe that for larger values of the parameter $a$ the entropy is growing like $\log_2a$. On plot (b) we show the Lyapunov exponent on the parametric interval $a\in[0, \, 7.5]$. The plot shows how there are some regions with chaotic behavior that repeat more or less at distance 2 in the parameter $a$. On the plot (c) we present the bifurcation diagram showing the dynamics. We observe clearly different saddle-node bifurcations, as before more or less at distance 2 in the parameter $a$, and period-doubling cascades leading to chaos as shown by the Lyapunov exponent. And on the bottom figure (d) we observe the function map (\ref{eq:SinModel}) changing the parameter $a$. From the figure we see how there are changes in the number of monotone branches, passing from unimodal maps to multiple modal ones. The thick lines denote the limit maps between $n$-modal and $n+1$-modal maps.

If we focus on the first changes of the map of Eqn.~(\ref{eq:SinModel}), we observe on the bottom plot that for the initial parameter value $a=1/2$, the function is strictly monotone and thus the only limit sets are fixed points. At $a\approx 2.45855$ a saddle-node bifurcation occurs creating a pair of stable and unstable fixed points -- see Fig.~\ref{fig:CosModel}. For $a\approx 2.6175$ the stable fixed point losses stability through the period doubling bifurcation, which is an onset of chaotic dynamics via the well known cascade of period doubling bifurcations.
\begin{figure}[htbp]
	\centerline{\includegraphics[width=\textwidth]{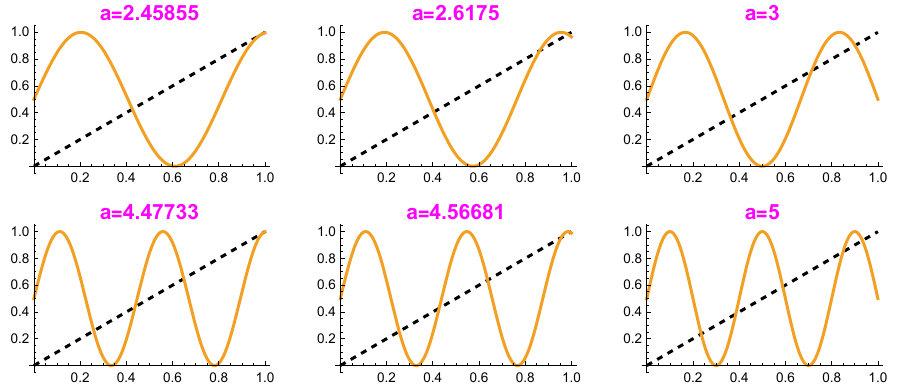}}
	\caption{Plot of $f_a(x)$ for different parameter values. A saddle-node bifurcation creates a pair of fixed points (left column). The stable fixed point losses stability via period doubling bifurcation (middle column). Further growth of parameter leads to creation of new symbol for conjugacy to symbolic dynamics (right column).\label{fig:CosModel}}
\end{figure}
Finally, for $a=3$ the interval $[0,1]$ can be split into subintervals $N_1=[0,\frac{1}{6}]$, $N_2=[\frac{1}{6},\frac{1}{2}]$, $N_3=[\frac{1}{2},\frac{5}{6}]$ and $N_4=[\frac{5}{6},1]$ in which the function is monotone. Every point $x\in N_1$ is mapped to either $N_3$ or $N_4$. The images of both $N_2$ and $N_3$ cover the range  $[0,1]$. Finally, the points $x\in N_4$ can be mapped to $N_3$ or $N_4$. Observe also, that for almost every $x\in [0,1]$, excluding countable set of points $\mathcal E= \bigcup_{i\in\mathbb N} f_{a=3}^{-i}\left(\{0,\frac{1}{6},\frac{1}{2},\frac{5}{6},1\}\right)$, the trajectory $\{f_{a=3}^i(x)\}_{i\in\mathbb N}$ visits the interiors of $N_i$'s only, which are pairwise disjoint sets. For every $x\in\mathcal I:=[0,1]\setminus \mathcal E $ the trajectory can be encoded as an infinite path on the directed graph shown in Fig.~\ref{fig:SampleGraph}, that is a sequence of vertices $(c_i)_{i\in\mathbb N}\in\{1,2,3,4\}^{\mathbb N}$, such that $f^i_{a=3}(x)\in \mathrm{int}N_{c_i}$ for $i\in\mathbb N$. This gives rise to semiconjugacy between $f_{a=3}|_{\mathcal I}$ and so-called symbolic dynamics on four symbols. This notion will be introduced  in Section~\ref{sec:rossler}.
\begin{figure}[htbp]
	\centerline{\includegraphics[width=.5\textwidth]{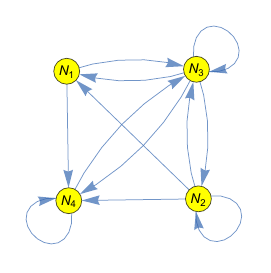}}
	\caption{Graph of symbolic dynamics of the sine model (\ref{eq:SinModel}) for $a=3$.\label{fig:SampleGraph}}
\end{figure}

Such scenario repeats when the parameter $a$ is growing. We observe a sequence of saddle node bifurcation, which create a new pair of stable-unstable fixed points. In fact, when $a=(2n+1)/2$ for $n \in \mathbb{N}$ a new extremum (as it can be seen in Fig.~\ref{fig:CosModel} (bottom)) of the map appear and so a new symbol. In consequence, the topological entropy of $f_a$ grows to infinity with $a\to\infty$.

We have briefly seen that in a simple case of the sine map, interesting changes in the topological entropy happen. For instance, in the celebrated article~\cite{Milnor88} it was proved that the entropy function is monotonically increasing for the quadratic (logistic) family. So, a clear extension and an interesting open question is to see  what happens for continuous systems. On that direction, in Section~\ref{sec:numbif} we present results of a biparametric numerical study of the R\"ossler system (\ref{eq:rossler}) exhibiting a similar mechanism of saddle-node bifurcations that may lead to growth of the entropy of the system. Then, in Section~\ref{sec:rossler}, we present an algorithmic approach to provide a computer-assisted proof of the existence the R\"ossler attractor for an explicit range of parameter values, validation of the existence of saddle-node bifurcations and for computation of a lower bound of the topological entropy.

\section{Numerical study of the R\"ossler system.}

Unlike the sine model (\ref{eq:SinModel}), R\"ossler system (\ref{eq:rossler}) depends on three parameters. In this paper we fix the value of $b=0.2$, similar results could be obtained with other values (see \cite{BBS09} for a parametric study of the system). Figure \ref{fig:ExpoConBif} shows a biparametric plate in which the values of the parameters $a$ and $c$ vary. The plate shows the values of the first two Lyapunov exponents \cite{W85}. The colours (from green to red) represent the chaotic region, identified by the fact that the maximum Lyapunov exponent is greater than 0. A grey gradient represents the regular region, where the maximum exponent cancels out and the second Lyapunov exponent is represented. Black means that the second exponent is close to zero and light grey means that the periodic attractor is more stable. The white region on the right (with values of $a$ close to $0.37$) indicates that the dominant dynamics is the escape dynamics (see \cite{BBS14} for a detailed explanation of the unbounded dynamics of the system). Superimposed on this plate are several bifurcation curves obtained with the well-known continuation software AUTO \cite{AUTO1, AUTO2}. In blue for saddle nodes and in red for period doubling. The illustration is not exhaustive, but is intended to show the huge number of bifurcations in the region shown. This results in a mixture of regions with different dynamics. To study the evolution of the model dynamics in more detail, we select a segment (with $c=15$, marked in white and dashed grey) that crosses many of these regions and almost reaches the region of unbounded dynamics.

\label{sec:numbif}
\begin{figure}[htbp]
\centerline{\includegraphics[width=.8\textwidth]{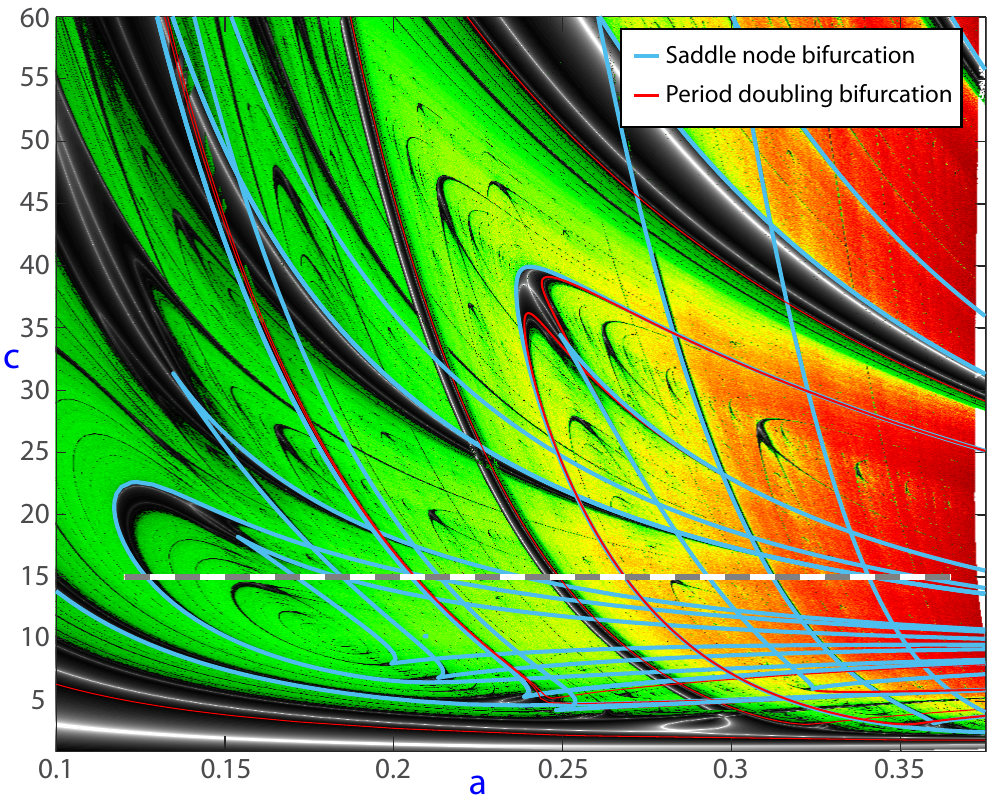}}
\caption{Biparametric plot of the largest Lyapunov exponents of (\ref{eq:rossler}) with $b=0.2$.
	Color (from green to red) represents chaotic regimes shown by the maximum Lyapunov exponent. In black and
	white, regular dynamics, first exponent is null and the second is
	represented. Blue curves mark saddle node bifurcations, while red curves
	indicate period doubling bifurcations. The white and grey dashed segment marks the line (with $c=15$) that we will study in more detail in the rest of the paper. \label{fig:ExpoConBif}}
\end{figure}

Given a suitable Poincar\'e section and $p_n$ the successive intersections of an orbit of the system with the previous Poincar\'e section, we can define the first return map of the orbit as FRM$(x_n) = x_{n+1}$ (where $x_n$ are the values of a selected coordinate at the successive points $p_n$). Like the R\"ossler system (\ref{eq:rossler}), many dynamical systems modelling problems of different nature are strongly dissipative \cite{BW83, LDM95, BLBD98, UM10, BBSS11, BBS12, SMB21}. Their dynamics is characterised by the fact that the contraction of their flow along the stable manifold of their equilibrium points is much larger than the expansion along the unstable manifold of their equilibrium points. For such strongly dissipative systems, the FRM allows to obtain a qualitative description of the topology of invariant chaotic sets \cite{LDM95, G98, GL02}.

\begin{figure}[htbp]
  \centerline{\includegraphics[width=.9\textwidth]{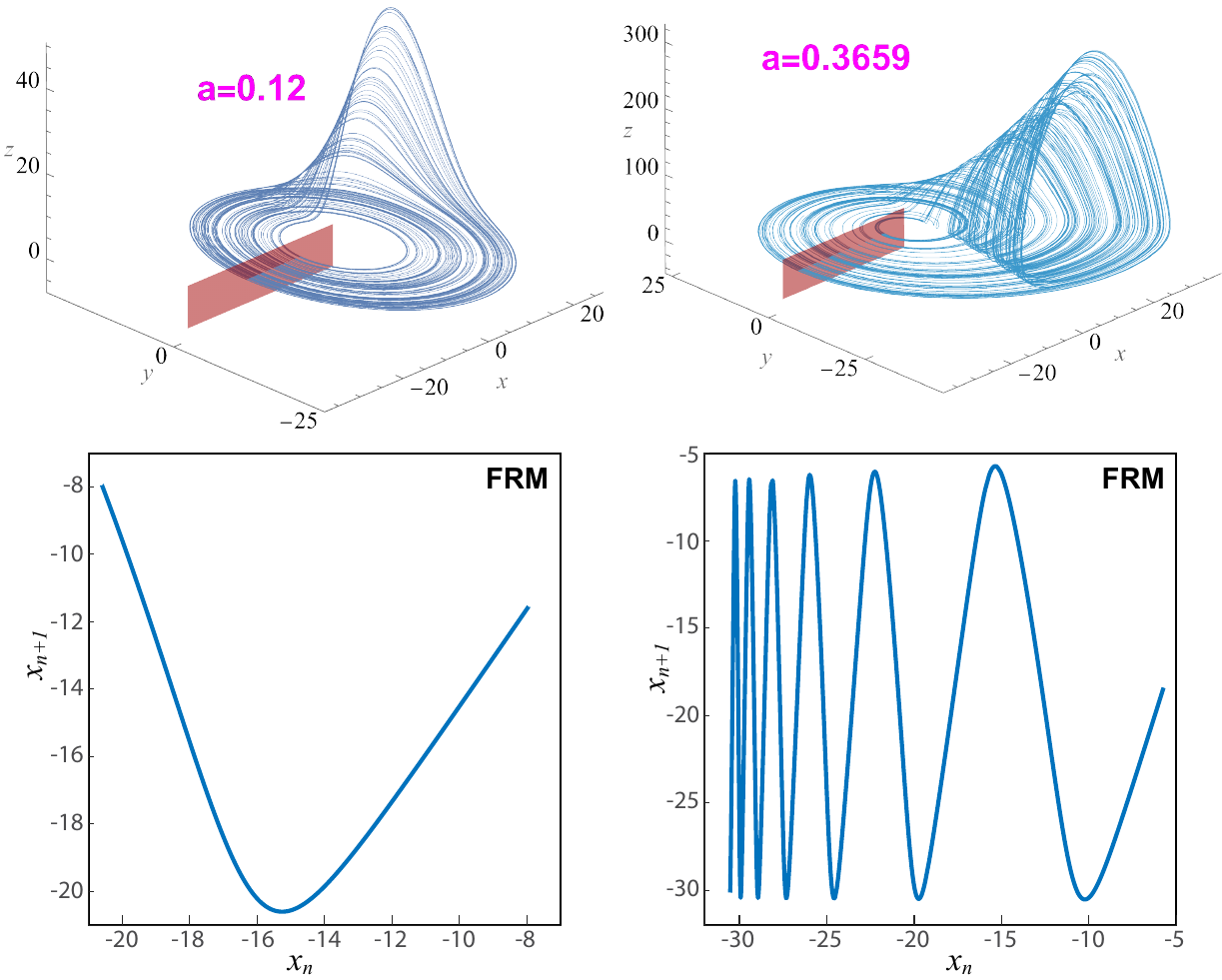}}
 \caption{Top: Poincar\'e section and a typical trajectory of (\ref{eq:rossler}) for $a=\amin=0.12$ (left) and $a=\amax=0.3659$ (right). Bottom: The corresponding FRM.\label{fig:AttractorAndSection}}
\end{figure}

For our analysis, we define Poincar\'e section
\begin{equation}\label{eq:PoincareSection}
	\Pi := \{(x,y,z)\in \mathbb R^3 : y=0 \wedge \dot y=x<0 \}.
\end{equation}
In Figure \ref{fig:AttractorAndSection} we show the chaotic attractor, the Poincar\'e section and the corresponding FRM for the two values of parameter $a$ ($0.12$ and $0.3659$) located at the two ends of the segment marked in Fig.~\ref{fig:ExpoConBif}. As can be seen, the structure of the second attractor (funnel type) is much more complex than that of the first (spiral type). This increase in complexity is reflected in a higher number of branches of the FRM.  If we obtain and plot the FRM of the invariant chaotic set on a sufficiently fine mesh of the selected interval, we can observe that this increase in complexity of the chaotic attractor occurs gradually. This is shown in Fig.~ \ref{fig:frm3d}. In this figure we have highlighted in a thicker line some FRMs that are approximately at the values of $a$ where the change from $n$ to $n+1$ branches for $n$ from 2 to 8 (the following changes appear closer together and it would be necessary to use a finer mesh to mark them with sufficient precision). Note that the calculations use the chaotic set both when it is an attractor and when it is a saddle. If the chaotic set is an attractor, then a good approximation of it, and therefore of its FRM, is easily obtained. However, when a stable periodic orbit coexists with the chaotic saddle, it is not easy to obtain. In this work, we use the Sprinkle method \cite{KG85}, since the transition time near the chaotic set is large and this allows us to obtain a good approximation of it.

\begin{figure}[htbp]
  \centerline{\includegraphics[width=.9\textwidth]{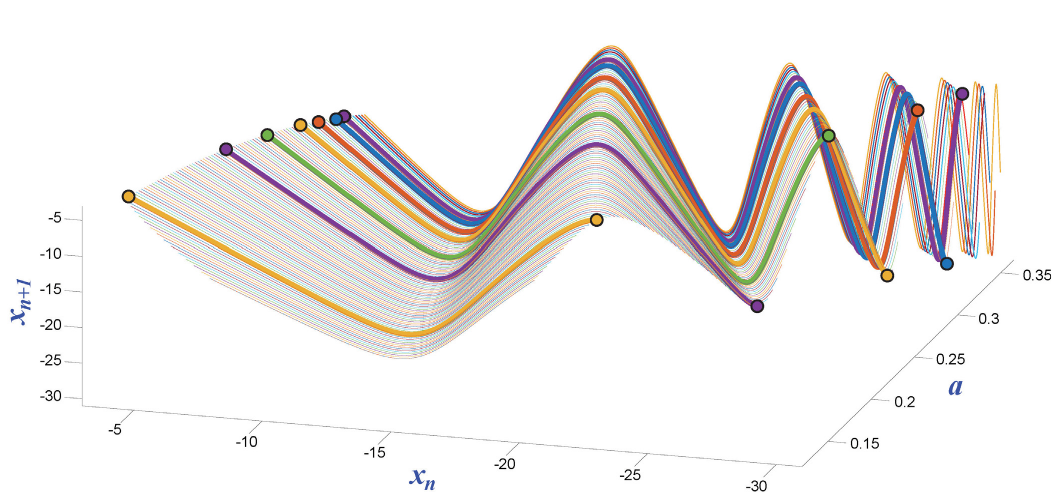}}
 \caption{FRMs for the existing invariant chaotic sets along the selected segment in Figure \ref{fig:ExpoConBif}. The highlighted FRMs roughly indicate the increase in an additional branch.\label{fig:frm3d}}
\end{figure}

We can study this evolution of the dynamics along the selected segment (with $c=15$) using different techniques. Figure \ref{fig:c15} shows at the top a lower bound on the entropy of the system (see subsection \ref{subsec:entropy} for more details). In the middle part we can see the first two Lyapunov exponents. As mentioned above, the first positive exponent indicates that the attractor is chaotic. Both indicators show the trend of increasing complexity of the chaotic attractor as we move up the segment by increasing the $a$ value. The second Lyapunov exponent gives us information in the regions where the attractor is regular. So if this exponent has very negative values, the periodic orbit is more stable. On the other hand, if it rises to zero at one point and then falls again, it indicates a period-doubling bifurcation. If the first exponent drops vertically from positive values to zero and the second exponent goes from 0 to negative values, we have a saddle-node bifurcation. Some of these bifurcations are marked with dashed green and purple segments, respectively. The bottom of the figure shows the bifurcation diagram obtained with the selected Poincaré section. In this bifurcation diagram, the regular and chaotic regions detected by the Lyapunov exponents are easily observed. We also mark with black dotted segments the value of $a$ at which a new branch appears in the FRM. As we can see, transitions from an odd number of branches to an even number of branches always occur in a regular window. In these windows we have marked the saddle-node bifurcations that give rise to them, as well as the two families of periodic orbits (stable in blue and unstable in red) that arise from them.

\begin{figure}[htbp]
  \centerline{\includegraphics[width=.9\textwidth]{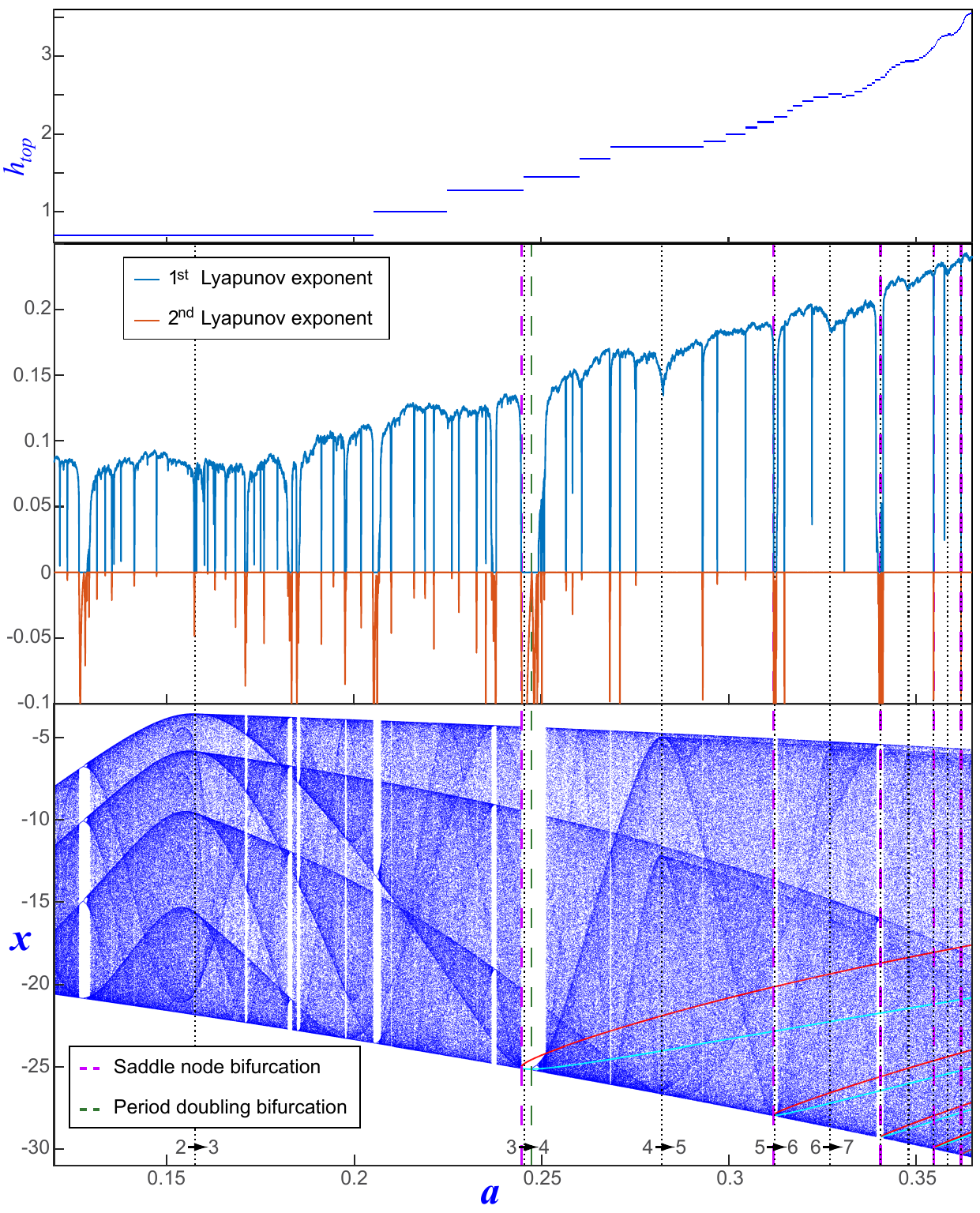}}
 \caption{Analysis of the evolution of the dynamics of the R\"ossler model along the selected segment in Figure \ref{fig:ExpoConBif} using different techniques. Top: lower bound for the topological entropy (see subsection \ref{subsec:entropy}). Middle: Lyapunov exponents. Bottom: Bifurcation diagram representing the $x$-coordinate of the intersection points of the attractor with the Poincar\'e section $\Pi$. The dashed vertical segments mark some saddle-node (in purple) and period doubling (in green) bifurcations.  Blue and red curves show stable and unstable families of periodic orbits born from previous saddle-node bifurcations.  The black dotted segments mark the approximate value by which the FRM increases by one branch. Above the $x$-axis are the number of branches to the left and right at the first transitions.\label{fig:c15}}
\end{figure}

%%%%%%%%%%%%%%%%%%%%%%%%%%%%%%%%%%%%%%%%%%%%%%%%%%%%%%%%%%%%%%%%%%%%%%%%%%%%

\section{The main results.}
\label{sec:rossler}
During this section we fix the parameter values $b=0.2$ and $c=15$ and we will study the dynamics of the R\"ossler system (\ref{eq:rossler}) in the range of parameter values
\begin{equation}\label{eq:arange}
	\mathcal A=[\amin,\amax]:=[0.12,0.3659].
\end{equation}

On the Poincar\'e section $\Pi$ defined by (\ref{eq:PoincareSection}) we will use $(x,z)$ coordinates. For a parameter value $a$ we define the following Poincar\'e map
\begin{equation}\label{eq:PoincareMap}
	P_a:\Pi\to\Pi.
\end{equation}

In this section we present several results about Poincar\'e map $P_a$. The proofs of all theorems are computer-assisted and the details will be presented in Section~\ref{sec:CAP}.

\begin{theorem}\label{thm:attractor}
For $a\in \mathcal A$ the Poincar\'e map $P_a$ is well defined and smooth on the set
\begin{equation}\label{eq:defTrappingRegion}
	\mathcal T:=[\xmin,\xmax]\times[\zmin,\zmax] = [-30.53,-3]\times[0.004,0.011]\subset \Pi
\end{equation}
and $P_a(\mathcal T)\subset \mathrm{int}\mathcal T$.
\end{theorem}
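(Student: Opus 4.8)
The statement is amenable to a standard computer-assisted approach based on rigorous integration of the flow of (\ref{eq:rossler}). Since the vector field is polynomial, hence real-analytic, the flow $\varphi_t^a$ depends smoothly on time, initial condition and the parameter $a$; consequently, once we know that every point of $\mathcal T$ returns to $\{y=0\}$ in finite time and that the return crossing is transversal, the return time is a smooth function by the implicit function theorem and $P_a$ is smooth as a composition of smooth maps. Transversality of a crossing of $\{y=0\}$ is exactly the condition $\dot y=x+ay=x\neq 0$ on that set, and on $\Pi$ we even require $x<0$; so the only genuinely computational facts to establish are: (i) for every $a\in\mathcal A$ and every point of $\mathcal T$ the trajectory reaches $\Pi$ again in finite time without escaping to infinity, (ii) the corresponding crossing of $\{y=0\}$ happens with $x<0$, and (iii) the image lands in $\mathrm{int}\,\mathcal T$ in the $(x,z)$ coordinates.

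\textbf{Plan of the computation.} I would realize (i)--(iii) with a Lohner-type rigorous integrator (the CAPD library \cite{capd,poincare,C1Lohner}). Partition the parameter interval $\mathcal A$ into finitely many subintervals $[a_j,a_{j+1}]$ and the section rectangle $\mathcal T$ into a grid of boxes $B_k$, and for each pair $([a_j,a_{j+1}],B_k)$ rigorously propagate the set $\{0\}\times B_k\subset\Pi$ by the flow. The integrator produces enclosures along the trajectory; one detects the two successive transversal crossings of $\{y=0\}$ (the first from $y<0$ to $y>0$, necessarily with $x>0$, and the second from $y>0$ to $y<0$, landing back on $\Pi$), verifying in each case that the enclosure of $x$ stays on one side of $0$, so that $\dot y=x$ does not vanish during the crossing. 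On the second crossing one obtains a rigorous enclosure of $P_a(B_k)\subset\Pi$ in $(x,z)$ coordinates, and it remains to check the inclusion $P_a(B_k)\subset(\xmin,\xmax)\times(\zmin,\zmax)$. Taking the union over the finite grid and over the finitely many parameter subintervals establishes the theorem; in passing this also certifies that no trajectory starting in $\mathcal T$ blows up before returning, so $P_a$ is globally defined on $\mathcal T$.

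\textbf{Main obstacle.} The hard part will be the wrapping effect. The trapping region $\mathcal T$ is extremely thin in the $z$-direction ($\zmax-\zmin=0.007$) while very long in $x$ (width about $27.5$), and along one loop a trajectory of the R\"ossler system makes a large transient spike in $z$ before relaminarising and returning to the thin slab $\Pi$; propagating a long thin box through this excursion tends to produce wildly overestimated enclosures unless the set is subdivided finely along $x$ and carried in a well-chosen moving coordinate frame. I expect that a sufficiently fine subdivision of $\mathcal T$ along $x$ (and a correspondingly fine subdivision of $\mathcal A$, which is a fairly wide parameter window) together with the doubleton/QR set representation and automatic step-size control of the integrator will suffice — the strong dissipativity of the system is an ally here, since the $z$-contraction quickly re-collapses the fibres — but keeping the number of boxes and the accumulated overestimation small enough that the final enclosures still fit strictly inside $\mathrm{int}\,\mathcal T$ is the delicate point. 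A practical refinement is to first verify the weaker inclusion $P_a(\mathcal T)\subset\Pi$ (well-definedness and transversality) on a coarser grid and only then tighten to $\mathrm{int}\,\mathcal T$, subdividing adaptively precisely where the enclosures approach $\partial\mathcal T$.
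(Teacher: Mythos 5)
Your proposal is correct and follows the same core strategy as the paper: cover $\mathcal A\times\mathcal T$ by boxes, rigorously integrate with CAPD to establish existence and transversality of the return (smoothness then following from the implicit function theorem), and refine adaptively where enclosures are too loose. The one genuine difference is in how the inclusion $P_a(\mathcal T)\subset\mathrm{int}\,\mathcal T$ is certified. You propose to verify $P_a(B_k)\subset\mathrm{int}\,\mathcal T$ for every box $B_k$ in a cover of the full two-dimensional rectangle $\mathcal T$; this is logically sound, but it forces the delicate inclusion check onto all $36$ million boxes needed for the existence step. The paper instead splits the argument: Step~1 only establishes that $P_a$ is defined (and hence smooth) on all of $\mathcal T$, with no constraint on the image; Step~2 verifies the inclusion $P_a(\partial\mathcal T)\subset\mathrm{int}\,\mathcal T$ on a cover of the one-dimensional boundary only, and then concludes $P_a(\mathcal T)\subset\mathrm{int}\,\mathcal T$ from the fact that $P_a$ is a diffeomorphism onto its image together with the Jordan curve theorem. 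This dimension reduction is precisely what keeps the hard part of the computation tractable in the face of the wrapping effect you correctly identify (the thinness of $\mathcal T$ in $z$ and the large excursion in $z$ along one loop); your full-measure inclusion check would likely require substantially more subdivision to get every interior box's image strictly inside $\mathcal T$. Both routes prove the theorem; the paper's buys efficiency at the price of the extra topological argument, yours is more direct but computationally heavier.
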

Thus, the set $\mathcal T$ is a trapping region for the Poincar\'e map $P_a$ for $a\in\mathcal A$ which contains nonempty, compact and connected maximal invariant set
$$
	\mathcal I_a := \bigcap_{n>0}P^n_a(\mathcal T).
$$

\subsection{Change of the shape of attractor.}
Section~\ref{sec:numbif} (see also Fig.~\ref{fig:AttractorAndSection}, Fig.~\ref{fig:frm3d} and Fig.~\ref{fig:c15}) strongly indicates that structure of the invariant set $\mathcal I_a$ is changing with the parameter $a\in[\amin,\amax]$. In this section we introduce some computable characteristic of the attractor and we will prove that it indeed changes when the parameter $a$ is varying.

For fixed parameter $a\in[\amin,\amax]$ and fixed $z\in[\zmin,\zmax]$ we define the following function
\begin{equation}\label{eq:faz}
f_{a,z}(x):= \pi_x P_a(x,z),
\end{equation}	
where $\pi_x$ is the projection onto $x$-coordinate. From Theorem~\ref{thm:attractor} it follows that for all $a\in[\amin,\amax], z\in[\zmin,\zmax]$ the function $f_{a,z}$ is smooth on $X:=[\xmin,\xmax]$ and its range is also in $X$.

\begin{definition}\label{def:relEx}
	Let $X$ be a closed interval and let $f:X\to X$ be continuous. A point $x_*\in\inte X$ is called a \emph{relevant extremum} of $f$ if $x_*$ is a proper local extremum and
	$$\min_{x\in X} f(x)\leq x_*\leq \max_{x\in X} f(x).$$
	The cardinality of the set of relevant extrema of $f$ is denoted by $\mathrm{relEx}(f)$.
\end{definition}
This definition is motivated by the following observation regarding the sine model (\ref{eq:SinModel}). Clearly any interval $[0,N]$, $N\geq1$ is forward invariant for $f_a$. However, all local extrema located out of the range of $f_a$, for example satisfying $x>1$, do not affect the structure of the invariant set in $[0,1]$. Thus, only the extrema that belong to the range $f_a([0,N])$ are relevant for the dynamics on the invariant set and their number provides some characteristic of the invariant set. That is, we are eliminating the transient dynamics created when the initial point is out of the interval of the invariant set. In most cases this transient is just one iteration of the map.

\begin{theorem}\label{thm:template}
	For parameter values $a$ listed in (\ref{eq:templateParameters}) the number of relevant extrema of $f_{a,z}:X\to X$ defined by (\ref{eq:faz}) does not depend on $z\in[\zmin,\zmax]$ and it is equal to
	
	\begin{equation}\label{eq:templateParameters}\scriptsize
	\begin{array}{c|c|c|c|c|c|c|c|c|c|c|c|c}
		a & \amin & 0.2 & 0.26 & 0.3 & 0.32 & 0.333 & 0.345 & 0.35 & 0.356 & 0.36 & 3.62 & \amax\\ \hline
		\mathrm{relEx}(f_{a,z}) & 1 & 2 & 3& 4& 5& 6& 7& 8& 9 & 10 & 11 & 12
	\end{array}
\end{equation}
\end{theorem}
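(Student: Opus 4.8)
The plan is to reduce Theorem~\ref{thm:template} to a finite collection of rigorous interval computations, one for each of the twelve parameter values listed in~(\ref{eq:templateParameters}). Fix such a parameter value $a$ (together with $b=0.2$, $c=15$), and recall from Theorem~\ref{thm:attractor} that for every $z\in[\zmin,\zmax]$ the map $x\mapsto f_{a,z}(x)=\pi_x P_a(x,z)$ is a well-defined smooth map of $X=[\xmin,\xmax]$ into itself. Counting $\mathrm{relEx}(f_{a,z})$ amounts to (i) locating all proper local extrema of $f_{a,z}$ in $\inte X$, i.e. the zeros of $\partial_x f_{a,z}$ at which the derivative changes sign, and (ii) discarding those whose value lies outside the interval $[\min_X f_{a,z},\max_X f_{a,z}]$. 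Since $P_a$ and its partial derivatives can be enclosed rigorously by a validated Poincar\'e-map solver (\cite{capd,poincare,C1Lohner}), both tasks are carried out by covering $X$ with a fine partition $\xmin = t_0 < t_1 < \dots < t_M = \xmax$ and evaluating interval enclosures of $f_{a,z}$ and $\partial_x f_{a,z}$ on each subinterval $[t_{i-1},t_i]$, uniformly over $z\in[\zmin,\zmax]$ (i.e. feeding the box $[t_{i-1},t_i]\times[\zmin,\zmax]$ to the solver). Uniformity in $z$ is exactly what gives the "does not depend on $z$'' part of the statement for free.

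The extrema are isolated as follows. On most subintervals the enclosure of $\partial_x f_{a,z}$ will be bounded away from zero (strictly positive or strictly negative throughout the box), so $f_{a,z}$ is monotone there and contains no extremum; these are discarded. The finitely many remaining subintervals, on which the derivative enclosure contains $0$, are the candidate windows for an extremum. I would refine the partition adaptively near each such window until consecutive monotone pieces of opposite sign are separated by a single short subinterval $[t_{i-1},t_i]$ on which $\partial_x f_{a,z}$ is shown to change sign; by the intermediate value theorem applied to $\partial_x f_{a,\cdot}$ this yields exactly one critical point $x_*\in[t_{i-1},t_i]$ for each fixed $z$, and the sign pattern of $\partial_x f_{a,z}$ on the two flanking monotone pieces certifies it is a proper maximum or minimum. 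Collecting the monotone pieces between successive critical windows also yields rigorous enclosures of $\min_X f_{a,z}$ and $\max_X f_{a,z}$: the global minimum and maximum of a piecewise-monotone function are attained either at the endpoints $\xmin,\xmax$ or at one of the critical points, and each of these finitely many values is enclosed by the solver.

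Once the critical values are enclosed, relevance is decided by interval arithmetic: an extremum at $x_*\in[t_{i-1},t_i]$ is kept if and only if the enclosure of $f_{a,z}([t_{i-1},t_i])$ is contained in the enclosure of $[\min_X f_{a,z},\max_X f_{a,z}]$, and it is discarded if that enclosure is disjoint from it. If for some extremum neither alternative is verified, the partition is refined further until a decision is reached; since the listed parameters are chosen away from the boundary cases (an extremum sitting exactly on the edge of the range, or a degenerate critical point), such refinement terminates. Counting the kept extrema gives $\mathrm{relEx}(f_{a,z})$, and running the procedure for each of the twelve parameter values produces the table in~(\ref{eq:templateParameters}); the detailed enclosures and the partition data are deferred to Section~\ref{sec:CAP}.

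The main obstacle, and the place where care is needed, is the certification step near each critical point: one must rule out that a derivative enclosure containing $0$ hides two nearby sign changes (a spurious extra pair of extrema) rather than one genuine extremum, and conversely that a genuine but shallow extremum is not missed because the partition is too coarse there. This is handled by the standard combination of a crude count of monotone slopes from a fine partition (giving an upper bound on the number of extrema) together with explicit IVT-certified sign changes (giving a matching lower bound); when the two agree the count is exact. Because $f_{a,z}$ is smooth, the twelve chosen parameter values avoid tangential/degenerate critical points, and each critical value is uniformly bounded away from the endpoints of the range, the scheme is guaranteed to close, and the uniformity of every enclosure over $z\in[\zmin,\zmax]$ delivers independence of $z$.
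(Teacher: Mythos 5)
Your overall strategy is the same as the paper's: reduce the statement to twelve rigorous interval computations, cover $X\times[\zmin,\zmax]$ adaptively, and certify monotonicity of $f_{a,z}$ on most subintervals via enclosures of $f'_{a,Z}$ that exclude zero, with uniformity in $z$ giving independence of $z$ for free. However, there is a genuine gap at exactly the point you flag as "the place where care is needed": the certification that each remaining window contains \emph{one} proper extremum and not several. Your proposed fix --- an IVT-certified sign change of $f'$ (lower bound) matched against "a crude count of monotone slopes from a fine partition" (claimed upper bound) --- does not close. First-derivative enclosures can only ever give a \emph{lower} bound on the number of sign changes: on a subinterval whose enclosure of $f'_{a,Z}$ contains $0$, no amount of refinement of the partition can exclude that $f'$ has three (or any odd number of) zeros there, because the data you compute is consistent with both possibilities. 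Counting monotone slopes of the partition therefore does not bound the number of extrema from above. The paper resolves this by additionally computing enclosures of $f''_{a,Z}$ on the candidate windows and bisecting until the second-derivative bound excludes zero: then $f'_{a,Z}$ is strictly monotone on that window, has at most one zero, and the flanking sign information upgrades this to exactly one proper (nondegenerate) extremum. Some second-order (or interval-Newton on $f'$) information of this kind is indispensable; your argument as written would only prove $\mathrm{relEx}(f_{a,z})\geq$ the tabulated values.

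A second, smaller discrepancy: Definition~\ref{def:relEx} declares $x_*$ relevant when the \emph{point} $x_*$ lies in $\left[\min_X f,\max_X f\right]$, i.e.\ when the critical point lies in the range of $f$. You test instead whether the critical \emph{value} $f(x_*)$ (via the enclosure of $f_{a,z}([t_{i-1},t_i])$) lies in that interval. These conditions are not equivalent; the correct check, and the one the paper performs, compares the location enclosure $[t_{i-1},t_i]$ against rigorous two-sided bounds on $\inf_X f_{a,Z}$ and $\sup_X f_{a,Z}$ extracted from the same cover.
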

The plot of $f_{a,z}$ for $z=\frac{1}{2}(\zmin+\zmax)$ and for $a\in\{\amin,\amax\}$ is shown in Fig.~\ref{fig:RelevantExtrema}. Theorem~\ref{thm:template} implies that between each pair of subsequent parameter values listed in (\ref{eq:templateParameters}) a global change in the structure of $\mathcal I_a$ occurs.

Indeed, we have the following theorem.
\begin{theorem}
The number of relevant extrema $\mathrm{relEx}$ is an invariant of conjugacy.
\end{theorem}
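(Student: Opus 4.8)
The plan is to deduce everything from two elementary facts about maps of intervals: a homeomorphism $h$ between two closed intervals $X$ and $Y$ is strictly monotone (so that $h(\mathrm{int}\,X)=\mathrm{int}\,Y$ and $h$ carries subintervals to subintervals), and the image $f(X)$ of a continuous $f\colon X\to X$ is again an interval, with endpoints $\min_X f$ and $\max_X f$. Let $f\colon X\to X$ and $g\colon Y\to Y$ be topologically conjugate, say $h\circ f=g\circ h$ for a homeomorphism $h\colon X\to Y$; equivalently $g=h\circ f\circ h^{-1}$. I will show that $h$ restricts to a bijection between the relevant extrema of $f$ and those of $g$, which immediately gives $\mathrm{relEx}(f)=\mathrm{relEx}(g)$.

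First step: interior proper local extrema correspond. I claim that $x_*\in\mathrm{int}\,X$ is a proper local extremum of $f$ if and only if $h(x_*)\in\mathrm{int}\,Y$ is a proper local extremum of $g$. Since $h(\mathrm{int}\,X)=\mathrm{int}\,Y$, the interior condition transfers. For the extremum condition, suppose for definiteness that $x_*$ is a proper local maximum of $f$, and pick a neighbourhood $U$ of $x_*$ with $f(x)<f(x_*)$ for every $x\in U\setminus\{x_*\}$. Then $h(U)$ is a neighbourhood of $h(x_*)$, and for $y\in h(U)\setminus\{h(x_*)\}$ we have $h^{-1}(y)\in U\setminus\{x_*\}$, hence $f(h^{-1}(y))<f(x_*)$; applying the monotone map $h$ gives $g(y)<g(h(x_*))$ when $h$ is increasing and $g(y)>g(h(x_*))$ when $h$ is decreasing. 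In either case $h(x_*)$ is a proper local extremum of $g$ (a maximum or a minimum according to the orientation of $h$). The reverse inclusion follows by the same argument applied to $h^{-1}$, which conjugates $g$ to $f$; thus $h$ induces a bijection between the interior proper local extrema of $f$ and those of $g$.

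Second step: the relevance condition is preserved. Because $X$ is connected and $f$ continuous, $f(X)$ is an interval with endpoints $\min_X f$ and $\max_X f$, and likewise for $g$; moreover $g(Y)=h(f(h^{-1}(Y)))=h(f(X))$. Since $h$ is a monotone homeomorphism of intervals, it maps the interval $f(X)$ onto the interval $g(Y)$, and consequently, for any point $t\in X$, one has $\min_X f\le t\le\max_X f$ precisely when $\min_Y g\le h(t)\le\max_Y g$. Taking $t=x_*$, this says that $x_*$ satisfies the relevance inequality for $f$ exactly when $h(x_*)$ satisfies it for $g$. Combining with the first step, the bijection $x_*\mapsto h(x_*)$ restricts to a bijection from the set of relevant extrema of $f$ onto the set of relevant extrema of $g$, and therefore $\mathrm{relEx}(f)=\mathrm{relEx}(g)$.

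The only point that needs a little attention is the orientation-reversing case: when $h$ is decreasing it interchanges local maxima with local minima and reverses the roles of $\min_X f$ and $\max_X f$. One must check that this swap changes neither the cardinality of the set of proper local extrema nor the validity of the relevance inequalities, and it plainly does not. All the other ingredients — strict monotonicity of interval homeomorphisms, their action on interiors and on subintervals, and the fact that a continuous image of an interval is an interval — are standard, so this is the main (and mild) obstacle.
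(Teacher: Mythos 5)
Your proof is correct and follows essentially the same route as the paper's: use strict monotonicity of the conjugating homeomorphism to show that proper interior local extrema correspond and that the relevance inequality transfers, treating the orientation-reversing case by swapping maxima and minima. The only (cosmetic) difference is that you handle the relevance condition by observing directly that $h$ maps the interval $f(X)$ onto $g(Y)$, whereas the paper proves $\sup g = \pi(\sup f)$ by a pointwise contradiction argument and then deduces two one-sided inequalities; your set-level phrasing is a bit cleaner but the substance is identical.
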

\begin{proof}
Let $X$, $Y$ be closed intervals, $f:X\to X$, $g:Y\to Y$ be continuous and $\pi:X\to Y$ a homeomorpishm such that $\pi \circ f = g\circ \pi$. We assume additionally that $\pi$ preserves orientation (it is increasing).

Let $M_f=\sup_{x\in X}f(x)$ and $M_g=\sup_{y\in Y}g(y)$ and let $x_M \in X$, $y_M \in Y$ be such that $M_f=f(x_M)$ and $M_g=g(y_M)$. First, we will show that $M_g=g(\pi(x_M))$. Assume it is not the case, that is
\begin{equation*}
    g(\pi(x_M))< g(y_M).
\end{equation*}
In what follows we will skip the symbol of function composition and simply write $\pi^{-1} g\pi$ instead of $\pi^{-1}\circ  g\circ \pi$.
Since $\pi$ is increasing, so is $\pi^{-1}$ and we have
\begin{equation*}
    f(x_M)=\big(\pi^{-1}g\pi\big) x_M< \big(\pi^{-1}g\big) y_M = \big(\pi^{-1} g \pi\big)(\pi^{-1}y_M) = f(\pi^{-1}y_M)
\end{equation*}
which is a contradiction. From the above we conclude that
\begin{equation}
    M_g = g(\pi x_M) = \pi(\pi^{-1}g\pi)x_M = (\pi f)x_M = \pi M_f.
\end{equation}

Similarly, if $m_f=\inf_{x\in X} f(x)$ then $m_g=\inf_{y\in Y} g(y) = \pi m_f$.

We will show now that if $x_*$ is a relevant extremum of $f$ then $y_*=\pi(x_*)$ is relevant extremum of $g$. Since $m_f\leq x_*\leq M_f$ and $\pi$ is increasing we have
\begin{equation*}
    m_g\leq y_*\leq M_g.
\end{equation*}
There remains to show that $y_*$ is a proper extremum of $g$. Indeed, if $U_f$ is an open interval containing $x_*$ such that $f(x)<f(x_*)$ for $x\in U_f\setminus\{x_*\}$ then $U_g=\pi U_f$ is an open interval in $Y$ containing $y_*$ and for $y\in U_g\setminus\{y_*\}$ we have $y=\pi x$ for some $x\in U_f\setminus\{x_*\}$ and
\begin{equation*}
    g(y) = \pi(\pi^{-1}g\pi)x = \pi f(x) < \pi f(x_*) =  (\pi f \pi^{-1})\pi x_* = g(y_*).
\end{equation*}
Similarly we argue that if $x_*$ is a proper minimum for $f$ then so is $\pi x_*$ for $g$.

Summarizing, we have shown that $\mathrm{relEx}(f)\leq \mathrm{relEx}(g)$. Since the conjugacy relation is symmetric, we can repeat the arguments for $\pi^{-1}$ and conclude that $\mathrm{relEx}(g)\leq \mathrm{relEx}(f)$.

The proof for the case of decreasing $\pi$ goes similarly, although the role of minima and maxima must be exchanged.
\end{proof}

\begin{figure}[htbp]
  \centerline{\includegraphics[width=\textwidth]{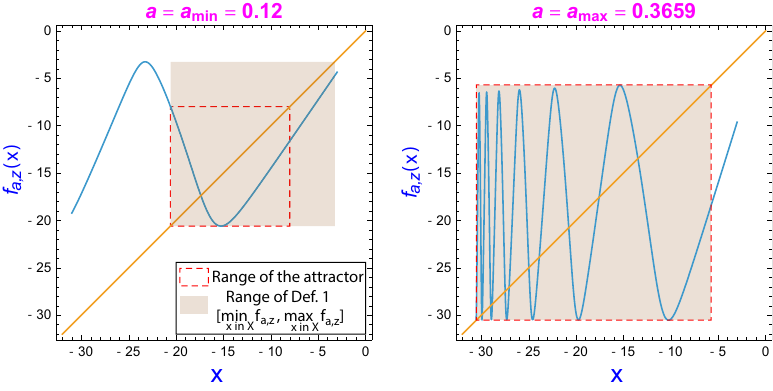}}
  \caption{Plot of the function $f_{a,z}:[\xmin,\xmax]\to[\xmin,\xmax]$ for $z=z_{\mathrm{mid}}:=\frac{1}{2}(\zmin+\zmax)$ and
	$a=\amin$ (left) and $a=\amax$ (right). It is a numerical evidence that $\mathrm{relEx}(f_{\amin,z_{\mathrm{mid}}})=1$ and $\mathrm{relEx}(f_{\amax,z_{\mathrm{mid}}})=12$. We show in a brown rectangle the range of  Def.~\ref{def:relEx}($\left[\min_{x\in X} f_{a,z}, \, \max_{x\in X} f_{a,z}\right]$). To compare with the range of the attractor (see bottom plots of Fig.~\ref{fig:AttractorAndSection}), we plot them as a red rectangle.
	 \label{fig:RelevantExtrema}}
\end{figure}

%%%%%%%%%%%%%%%%%%%%%%%%%%%%%%%%%%%%%%%%%%%%%%%%%%%%%%%%%%%%%%%%%%%%%%%%%%%%

\subsection{Saddle-node bifurcations.}
In the sine model (\ref{eq:SinModel}) we observed, that some of relevant extrema (in particular, the maxima) are created via saddle-node bifurcations. Such bifurcation creates a stable periodic orbit, which coexists with chaotic and repelling invariant set. As can be seen in Fig.~\ref{fig:c15}, this is also observed in the R\"ossler model (\ref{eq:rossler}). In this section we will focus on this scenario.

Consider the following function.
\begin{equation}\label{eq:BifEquation}
	g(x,z,a) = \left(P_a(x,z)-(x,z),\mathrm{det}(DP_a(x,z)-\mathrm{Id})\right).
\end{equation}
Clearly $g(x,z,a)=0$ if $(x,z)$ is a fixed point of $P_a$ and $\lambda=1$ is an eigenvalue of $DP_a(x,z)$. Using standard Newton method applied to $g$ we have found  approximate zeroes of $g$ in the parameter range $[\amin,\amax]$ -- see Table~\ref{tab:BifPoints}.

\begin{table}[htbp]
\begin{center}
\begin{tabular}{c|c|c|c}
		$i$ & $x_i$ & $z_i$ & $a_i$\\ \hline
		1 & -24.98615641824929 & 0.005003695953767296 & 0.2445890212249042\\ \hline
		2 & -27.90101006311546 & 0.004663547021688063 & 0.3119866509093180\\ \hline
		3 & -29.22211573599117 & 0.004524155280447693 & 0.3405236989996505\\ \hline
		4 & -29.89377365336397 & 0.004456435009829962 & 0.3546641965836549\\ \hline
		5 & -30.25817997684925 & 0.004420535075303823 & 0.3623180183723328
\end{tabular}
\vskip\baselineskip
\caption{Approximate bifurcation points $(x_i,z_i,a_i)$.\label{tab:BifPoints}}
\end{center}
\end{table}

\begin{theorem}\label{thm:bifurcation}
	The Poincar\'e map $P_a$ undergoes saddle-node bifurcation at the points $(x_i^*,z_i^*,a_i^*)$, $i=1,\dots,5$
	with
	\begin{equation}\label{eq:BifPointBounds}
	|x_i^*-x_i|\leq 3\cdot 10^{-11},\qquad |z_i^*-z_i|\leq 10^{-14},\qquad |a_i^*-a_i|\leq 6\cdot 10^{-14},
	\end{equation}
	where $(x_i,z_i,a_i)$ are listed in Table~\ref{tab:BifPoints}. 	Moreover, $\mathrm{Sp}\left(DP_{a_i^*}(x_i^*,z_i^*)\right)=\{1,\lambda_i\}$ and $|\lambda_i|<2\cdot 10^{-4}$.
	
	For $i=1,\dots,5$ there are two different and continuous branches of fixed points $L_i(a)$, $R_i(a)$ parameterized by $a\in[a_i^*,\amax]$ such that $L_i(a_i^*)=R_i(a_i^*)=(x_i^*,z_i^*)$ and $L_i(a)\neq R_i(a)$ for $a>a_i^*$.
	
\end{theorem}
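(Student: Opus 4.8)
The plan is to reduce the saddle-node bifurcation to a finite list of verifiable inequalities on the map $g$ from~(\ref{eq:BifEquation}) and its derivatives, all of which can be checked with interval arithmetic using rigorous bounds on $P_a$ and $DP_a$ (available by the techniques referenced in the introduction, e.g.\ \cite{poincare,C1Lohner}). First I would set up, for each $i=1,\dots,5$, a small box $B_i = [x_i-\delta,x_i+\delta]\times[z_i-\delta',z_i+\delta']\times[a_i-\delta'',a_i+\delta'']$ around the approximate data in Table~\ref{tab:BifPoints}, with the radii chosen to match the bounds claimed in~(\ref{eq:BifPointBounds}). The existence and local uniqueness of a genuine zero $(x_i^*,z_i^*,a_i^*)\in B_i$ of $g$ I would obtain from the interval Newton operator (Krawczyk's variant): one evaluates $Dg$ on $B_i$, checks that it is nonsingular there, and verifies that the Newton operator maps $B_i$ strictly into itself. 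This simultaneously yields the enclosure~(\ref{eq:BifPointBounds}) and the fact that the zero is unique in $B_i$. The spectral statement $\mathrm{Sp}(DP_{a_i^*}(x_i^*,z_i^*))=\{1,\lambda_i\}$ with $|\lambda_i|<2\cdot10^{-4}$ then follows because the third component of $g=0$ forces $\det(DP_{a_i^*}-\mathrm{Id})=0$, i.e.\ $1$ is an eigenvalue, while the trace of the $2\times2$ matrix $DP_{a_i^*}(x_i^*,z_i^*)$ — enclosed by interval arithmetic on $B_i$ — equals $1+\lambda_i$, giving the bound on the second eigenvalue.

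Next I would verify the nondegeneracy conditions that upgrade ``a fixed point with eigenvalue $1$'' to ``a saddle-node bifurcation''. Let $v_i$ be a unit right eigenvector and $w_i$ a left eigenvector of $DP_{a_i^*}(x_i^*,z_i^*)$ for the eigenvalue $1$; both can be enclosed rigorously since the matrix is enclosed and the eigenvalue is known exactly to be $1$. The two standard Sotomayor conditions are
$$
w_i^\top \partial_a P_{a}(x_i^*,z_i^*)\big|_{a=a_i^*}\neq 0,
\qquad
w_i^\top D^2 P_{a_i^*}(x_i^*,z_i^*)(v_i,v_i)\neq 0,
$$
and each reduces to checking that an interval-arithmetic enclosure of a scalar quantity excludes $0$. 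The parameter derivative $\partial_a P_a$ and the second derivative $D^2P_{a_i^*}$ are obtainable by propagating the variational equations one and two orders up along the trajectory between Poincar\'e sections. Because $\lambda_i$ is tiny (the system is strongly dissipative on the section, exactly the regime emphasized in Section~\ref{sec:numbif}), the eigenvalue $1$ is simple and well separated, so the eigenvectors and all associated quantities are numerically stable and the enclosures will be tight.

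Finally, for the two branches of fixed points: once the Sotomayor conditions hold, the classical saddle-node normal form gives a local curve of fixed points of $P_a$ through $(x_i^*,z_i^*,a_i^*)$ lying (locally) on one side of $a=a_i^*$, splitting into two branches $L_i(a),R_i(a)$ for $a>a_i^*$. To make this rigorous and, more importantly, to \emph{extend} the branches all the way to $a=\amax$, I would combine a rigorous parametrized version of the implicit function theorem with a continuation argument: away from $a_i^*$ the relevant eigenvalue moves off $1$, so $DP_a-\mathrm{Id}$ restricted to the center direction is invertible, and I can enclose each branch by covering $[a_i^*,\amax]$ with finitely many parameter subintervals and applying the interval Newton operator to the fixed-point equation $P_a(x,z)=(x,z)$ on a tube around the numerically continued branch. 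Keeping $L_i$ and $R_i$ in disjoint tubes for $a>a_i^*$ establishes $L_i(a)\neq R_i(a)$. The main obstacle I anticipate is the branch continuation on the full interval $[a_i^*,\amax]$: near the bifurcation point the Jacobian $DP_a-\mathrm{Id}$ is ill-conditioned, so a naive fixed-point Newton step fails, and one must instead parametrize the branch by an appropriate phase-space coordinate (e.g.\ $x$) rather than by $a$ in a neighborhood of $a_i^*$ — effectively tracking the fold — and only switch back to $a$-parametrization once the center eigenvalue is safely bounded away from $1$; arranging a rigorous, seamless hand-off between these two parametrizations, with verified overlap, is the delicate part of the computer-assisted argument.
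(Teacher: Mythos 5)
Your proposal follows essentially the same skeleton as the paper's proof: an interval Newton operator applied to $g$ from (\ref{eq:BifEquation}) to enclose the bifurcation point and obtain the bound on the second eigenvalue (the paper uses a Gershgorin estimate where you use the trace, an immaterial difference for a $2\times2$ matrix), followed by a continuation of the two branches to $a=\amax$ via a covering of the parameter interval by boxes on which a parametrized interval Newton test succeeds, with a verified overlap/hand-off between an $x$-parametrization near the fold and an $a$-parametrization away from it. You correctly anticipated that the hand-off is the delicate point; the paper's Steps 2, 4 and 5 implement exactly this. The one genuine divergence is the nondegeneracy argument. You verify the two Sotomayor transversality conditions $w^\top\partial_aP\neq0$ and $w^\top D^2P(v,v)\neq0$ and then invoke the saddle-node normal form; the paper instead works directly with the fixed-point curve $x\mapsto(x,z(x),a(x))$ obtained from the parametrized Newton method, and shows that $a(\cdot)$ is convex on $X_i$ with $a'$ changing sign across the box, hence has a unique interior minimum which (by $a'(x_i^*)=0$) must be $x_i^*$. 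Both routes need second-order variational data, but the paper's version has the advantage of delivering the theorem's actual conclusion — two branches defined for $a\geq a_i^*$, coinciding only at $a_i^*$ — without a separate step to determine on which side of $a_i^*$ the fixed points live; in your version that sign must still be read off from the product of the two transversality quantities, a point your write-up leaves implicit. Neither difference is a gap.
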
	
In Fig.~\ref{fig:BifDiagram} we present bifurcation diagram of the fixed points of $P_a$ resulting from Theorem~\ref{thm:bifurcation}. The turning points are the points of the saddle-node bifurcation leading to a stable and an unstable branch of fixed points of $P_a$.
\begin{figure}[htbp]
	\centerline{\includegraphics[width=.75\textwidth]{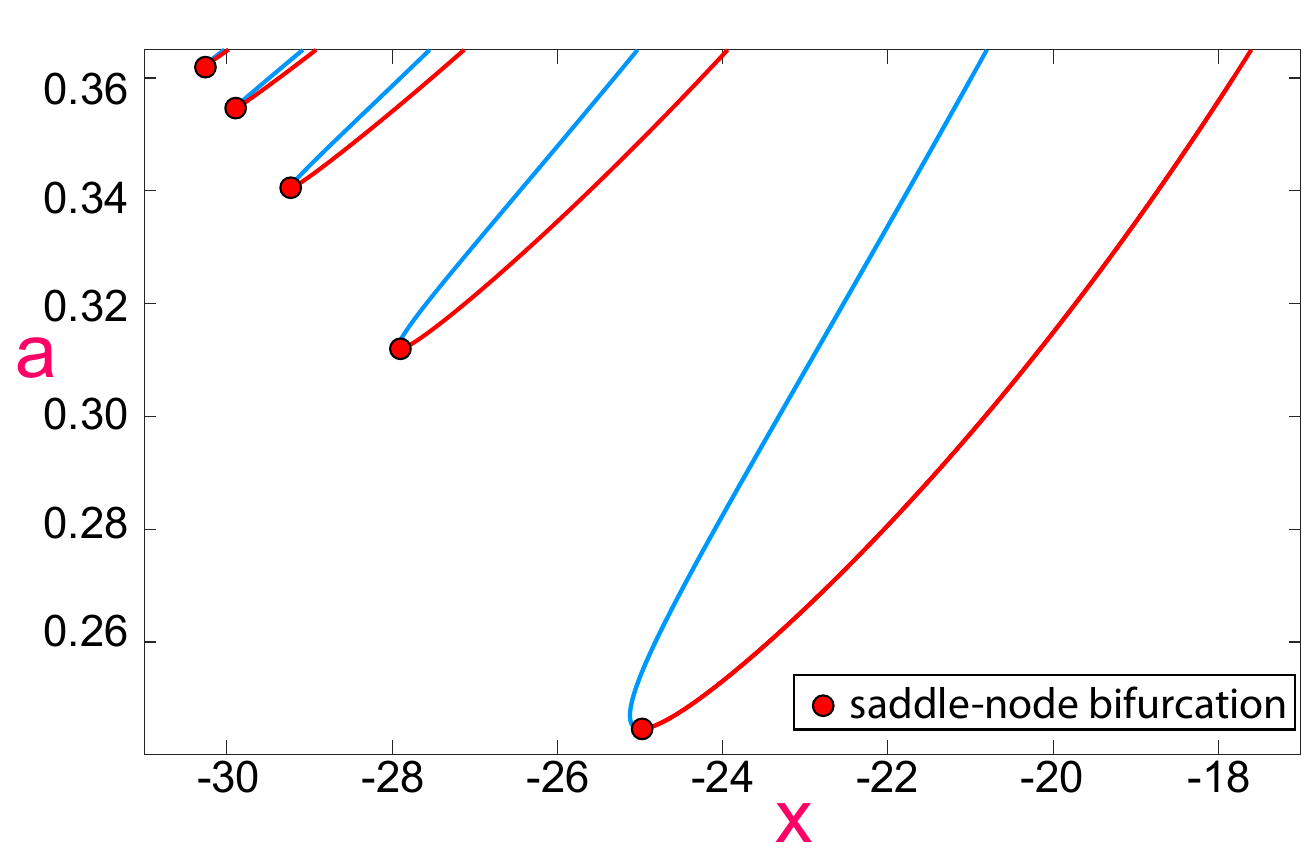}}
	\caption{Saddle-node bifurcations and branches of fixed points of $P_a$ resulting from Theorem~\ref{thm:bifurcation}.\label{fig:BifDiagram}}
\end{figure}
Since the absolute value of the non-bifurcation eigenvalue of $DP_{a_i^*}(x_i^*,z_i^*)$ at each bifurcation point is less than $1$, for parameter values slightly above $a_i^*$, $i=1,\dots,5$, an attracting periodic orbit is created and it coexists with unstable chaotic invariant set -- see Fig.~\ref{fig:c15}.

%%%%%%%%%%%%%%%%%%%%%%%%%%%%%%%%%%%%%%%%%%%%%%%%%%%%%%%%%%%%%%%%%%%%%%%%%%%%

\subsection{Symbolic dynamics and topological entropy.}
\label{subsec:entropy}
In the scenario presented for the sine model (\ref{eq:SinModel}) we have seen that increasing number of relevant extrema leads to growth of the topological entropy of the system. Here we address the same question for the Poincar\'e map $P_a$.

The following definitions are standard (see for example
\cite{GH,Katok_Hasselblatt_1995}). Let us fix $k>0$ and let
$\{M_{ij}\}_{i,j=1,\dots,k}$ be $k \times k$ matrix, such that
$M_{ij} \in \{0,1\}$. We define $\Sigma_M$ by
\begin{eqnarray}\label{eq:transitionMatrix}
	\Sigma_M &=& \{ c \in \{1,2,\dots,k\}^\mathbb{Z} \ | \ M_{c_i c_{i+1}}=1 \
	\forall i \in \mathbb{Z}
	\}.
\end{eqnarray}
We define a shift map $\sigma$ on $\Sigma_M$ by
\begin{equation*}
	\sigma(c)_i= c_{i+1}, \quad \forall i \in \mathbb{Z}.
\end{equation*}
The pair $(\Sigma_M,\sigma)$ is called \emph{subshift of finite type with transition matrix $M$}.

The shift dynamics can be easily visualized on finite graphs. The constant $k$ is often called \emph{the number of symbols} but can be seen as the number of vertices in a directed graph. The transition matrix $M$ defines edges in this graph ($M_{ij}=1$ iff there is an edge from vertex $i$ to vertex $j$). A biinfinite sequence $(c_i)_{i\in\mathbb Z}\in \Sigma_M$ defines a biinfinite path in this graph. Clearly, the complexity of the shift dynamics (number of different trajectories or different possible paths on the graph) increases when we add new edges to the graph (new nonzero coefficient in $M$).

The following theorem is a classical result about entropy of topological Markov chains.

\begin{theorem}[{\cite[Prop.~3.2.5]{Katok_Hasselblatt_1995}}]\label{thm:shiftEntropy}
	The topological entropy of the shift map $(\Sigma_M,\sigma)$ is equal to
	$$
	h_{top}(\sigma) = h_{top}(M)=\max_{\lambda\in\mathrm{Sp}(M)}\log(|\lambda|).
	$$
\end{theorem}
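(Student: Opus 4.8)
The plan is to reduce the statement to the classical identity $h_{top}(\sigma)=\lim_n\frac1n\log W_n$, where $W_n$ is the number of admissible words of length $n$, and then to identify this growth rate with the spectral radius of $M$ by Gelfand's formula. First I would fix a metric generating the product topology on $\Sigma_M\subset\{1,\dots,k\}^{\mathbb Z}$, say $d(c,c')=2^{-\min\{|i|\,:\,c_i\neq c_i'\}}$ (and $d(c,c)=0$), so that $\Sigma_M$ is compact and $\sigma$ is a homeomorphism. Let $W_n$ be the number of words $w\in\{1,\dots,k\}^n$ that occur as a block $(c_{j+1},\dots,c_{j+n})$ of some $c\in\Sigma_M$. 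A short computation with the Bowen metric $d_n(c,c')=\max_{0\le j<n}d(\sigma^jc,\sigma^jc')$ shows that two points of $\Sigma_M$ are $2^{-m}$-close for $d_n$ iff they agree on a window of $n+2m$ consecutive coordinates; hence the maximal size of an $(n,2^{-m})$-separated set — which equals the minimal size of an $(n,2^{-m})$-spanning set — is exactly $W_{n+2m}$. Since splitting a word into prefix and suffix gives $W_{p+q}\le W_pW_q$, the limit $\lim_n\frac1n\log W_n$ exists by Fekete's lemma, and the additive constant $2m$ is irrelevant to the rate, so passing to $\limsup_n$ and then $m\to\infty$ in the definition of topological entropy yields $h_{top}(\sigma)=\lim_n\frac1n\log W_n$.

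Next I would express $W_n$ through powers of $M$. Assuming (as is standard, and as holds in every application in Section~\ref{sec:CAP}) that $M$ is \emph{essential}, i.e.\ every index occurs as a nonzero entry in some row and in some column, every admissible word extends to a bi-infinite admissible sequence; therefore $W_n$ is precisely the number of length-$(n-1)$ paths in the graph of $M$, that is $W_n=\sum_{i,j=1}^k(M^{n-1})_{ij}$. Because $M$ has nonnegative entries, this sum equals $\|M^{n-1}\|$ in the entrywise $\ell^1$ norm on matrices, which is submultiplicative. By the spectral radius formula $\lim_m\|M^m\|^{1/m}=\rho(M):=\max_{\lambda\in\mathrm{Sp}(M)}|\lambda|$, and hence $h_{top}(\sigma)=\lim_n\frac1n\log W_n=\lim_n\frac{n-1}{n}\cdot\frac1{n-1}\log\|M^{n-1}\|=\log\rho(M)=\max_{\lambda\in\mathrm{Sp}(M)}\log|\lambda|=h_{top}(M)$, as claimed.

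The computations above are routine; the one point requiring genuine care is the bookkeeping for non-essential transition matrices. If $M$ has transient states or states with no outgoing edge, counting graph paths overcounts the blocks actually appearing in $\Sigma_M$, and if $\Sigma_M=\emptyset$ (for instance when $M$ is nilpotent) the identity must be read with the convention $\log 0=-\infty$. In full generality one replaces $M$ by the transition matrix of the union of its recurrent strongly connected components: this leaves the word-count growth rate unchanged, and an eigenvalue realizing $\rho(M)$ is always carried by the recurrent part, so no generality is lost. In the present paper the matrices arising from the covering relations are genuine transition matrices of nonempty subshifts with $\rho(M)\ge 1$, so this subtlety never intervenes.
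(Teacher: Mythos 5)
The paper does not actually prove this statement --- it is quoted verbatim from Katok--Hasselblatt (Prop.~3.2.5) --- and your argument is precisely the standard textbook proof of that proposition: identify $h_{top}(\sigma)$ with the exponential growth rate of the number $W_n$ of admissible words (which the ultrametric makes exact, since $(n,2^{-m})$-separated and $(n,2^{-m})$-spanning cardinalities both equal $W_{n+2m}$), count words by $W_n=\sum_{i,j}(M^{n-1})_{ij}$, and conclude via submultiplicativity and Gelfand's formula. The argument is correct, and you rightly flag the only delicate point --- the identity $W_n=\sum_{i,j}(M^{n-1})_{ij}$ needs $M$ to be essential, and for general $M$ one passes to the recurrent strongly connected components, which carry both the word-growth rate and the spectral radius --- a caveat that is moot here since every transition matrix produced in Section~\ref{sec:CAP} is essential.
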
	
It is well known that the topological entropy is an invariant of conjugacy of maps. In the case of semiconjugacy we obtain only one-side inequality. Thus, showing semiconjugacy of a map $f$ to the shift dynamics $\sigma$ is a way to obtain a lower bound of the topological entropy $h_{top}(f)\geq h_{top}(\sigma)$.

\begin{theorem}\label{thm:entropy}
For all parameter values $a\in[\amin,\amax]$ there is an invariant subset $\mathcal H_a\subset \mathcal T$ for $P_a$, such that $P_a|_{\mathcal H_a}$ is semiconjugated to a subshift of finite type. The number of symbols in symbolic varies from $2$ for $a=\amin$ to $13$ for $a=\amax$. A~lower bound for the topological entropy of $P_a$ in different parameter ranges is listed in Table~\ref{tab:entropy}.
\end{theorem}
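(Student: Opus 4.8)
\textbf{Proof proposal for Theorem~\ref{thm:entropy}.}

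The plan is to build, for each parameter subinterval listed in Table~\ref{tab:entropy}, an explicit covering relation structure on the Poincar\'e map $P_a$ and to apply a standard symbolic-dynamics result (e.g.\ the Zgliczy\'nski--type covering relations theorem) together with Theorem~\ref{thm:shiftEntropy} to extract the lower bound for $h_{top}(P_a)$. First I would fix a partition of a subset of the trapping region $\mathcal T$ into finitely many pairwise disjoint compact ``boxes'' $N_1,\dots,N_k$ (h-sets), aligned with the monotone branches of $f_{a,z}$ detected in Theorem~\ref{thm:template}; the choice of $k$ and of the boxes will depend on the parameter subinterval, ranging from $2$ boxes near $a=\amin$ up to $13$ near $a=\amax$. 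For each ordered pair $(i,j)$ I would then check, using rigorous enclosures of $P_a$ and its derivative (the same validated-integration machinery underlying Theorems~\ref{thm:attractor}--\ref{thm:bifurcation}), whether $N_i$ covers $N_j$ in the topological sense: the image $P_a(N_i)$ must stretch across $N_j$ in the expanding (here, $x$) direction while staying thin and contained in the interior in the contracting ($z$) direction. This yields a transition matrix $M = M(a)$ with $M_{ij}=1$ precisely when the covering relation holds.

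Next I would invoke the covering-relations theorem: a chain of coverings $N_{c_0}\Rightarrow N_{c_1}\Rightarrow\cdots$ along any bi-infinite admissible path in the graph of $M$ is realized by a genuine orbit of $P_a$ staying in $\bigcup_i N_i$, and moreover there is a continuous surjection from the invariant set $\mathcal H_a := \mathrm{Inv}\!\big(\bigcup_i N_i,\,P_a\big)$ onto $(\Sigma_M,\sigma)$ conjugating $P_a|_{\mathcal H_a}$ to the shift; strong dissipativity (contraction in $z$) makes the vertical alignment and the nonemptiness of $\mathcal H_a$ automatic. Then Theorem~\ref{thm:shiftEntropy} gives $h_{top}(P_a)\ge h_{top}(\sigma)=\log\rho(M)$, where $\rho(M)$ is the spectral radius of the integer matrix $M$, which is computed (with rigorous bounds, e.g.\ via interval arithmetic on the characteristic polynomial or by bracketing the Perron eigenvalue) to produce the numbers in Table~\ref{tab:entropy}. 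To cover an entire parameter subinterval rather than a single $a$, I would verify all the required covering relations using an interval enclosure of $a$ over that subinterval, so that the same matrix $M$ works uniformly; the subintervals in Table~\ref{tab:entropy} are chosen so that the relevant covering structure is constant on each.

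The main obstacle I expect is the rigorous verification of the covering relations near the ends of the parameter interval, where $k$ is large (up to $13$): the boxes $N_i$ become narrow, the expansion along some branches is weak near the critical points of $f_{a,z}$ (the relevant extrema), and the images $P_a(N_i)$ must be enclosed tightly enough to certify that they genuinely cross the target boxes without spurious overlaps. This requires careful placement of the partition edges away from the critical values, possibly subdividing some boxes, and controlling the wrapping effect in the validated integrator over the whole parameter range; handling the parameter as an interval (rather than a point) compounds this difficulty. A secondary, milder issue is bookkeeping: one must confirm that the constructed $\mathcal H_a$ is contained in $\mathcal T$ (immediate from Theorem~\ref{thm:attractor}) and that the transition graph is strongly connected on the relevant component so that the Perron--Frobenius eigenvalue genuinely bounds the entropy from below. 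The details of the enclosures and the explicit boxes will be deferred to the computer-assisted part in Section~\ref{sec:CAP}.
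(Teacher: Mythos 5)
Your proposal follows essentially the same route as the paper: h-sets are built from the approximate relevant extrema of $f_{a,z}$, covering relations $N_i\cover{P_a}N_j$ are verified with validated enclosures of the images of the vertical edges (the condition $P_a(|N_i|)\subset N_j^s$ being automatic from Theorem~\ref{thm:attractor}), and Theorem~\ref{thm:covrel} together with Theorem~\ref{thm:shiftEntropy} yields the semiconjugacy and the entropy bound, with an adaptive subdivision in the parameter $a$. This matches the paper's computer-assisted proof, so no further comparison is needed.
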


\begin{table}[htbp]
	\centering \tiny
	\begin{tabular}{c|c|c|c||c|c|c|c||c|c|c|c}
	$i$ &	$a_i$ & $k_i$ & $h_{top}(\sigma_i)$ & $i$ & $a_i$ & $k_i$ & $h_{top}(\sigma_i)$ & $i$ & $a_i$ & $k_i$ & $h_{top}(\sigma_i)$\\ \hline\hline
$1$ & $0.12$ & $2$ & $0.69424$ & $30$ & $0.342225$ & $8$ & $2.79881$ & $59$ & $0.360131$ & $11$ & $3.29216$\\ \hline
$2$ & $0.205347$ & $3$ & $1$ & $31$ & $0.34285$ & $8$ & $2.83202$ & $60$ & $0.360669$ & $11$ & $3.30727$\\ \hline
$3$ & $0.224948$ & $3$ & $1.27155$ & $32$ & $0.34345$ & $8$ & $2.86293$ & $61$ & $0.360981$ & $11$ & $3.32192$\\ \hline
$4$ & $0.245266$ & $3$ & $1.44998$ & $33$ & $0.344494$ & $8$ & $2.89187$ & $62$ & $0.361279$ & $11$ & $3.33614$\\ \hline
$5$ & $0.248131$ & $4$ & $1.44998$ & $34$ & $0.345848$ & $8$ & $2.9191$ & $63$ & $0.361464$ & $11$ & $3.34995$\\ \hline
$6$ & $0.26019$ & $4$ & $1.68451$ & $35$ & $0.347179$ & $8$ & $2.93608$ & $64$ & $0.361652$ & $11$ & $3.36338$\\ \hline
$7$ & $0.268539$ & $4$ & $1.8325$ & $36$ & $0.349673$ & $9$ & $2.95264$ & $65$ & $0.361687$ & $11$ & $3.34995$\\ \hline
$8$ & $0.282119$ & $4$ & $1.8325$ & $37$ & $0.350887$ & $9$ & $2.97691$ & $66$ & $0.361737$ & $11$ & $3.36338$\\ \hline
$9$ & $0.293257$ & $5$ & $1.89996$ & $38$ & $0.351899$ & $9$ & $3$ & $67$ & $0.361862$ & $11$ & $3.37645$\\ \hline
$10$ & $0.299273$ & $5$ & $2$ & $39$ & $0.352489$ & $9$ & $3.02203$ & $68$ & $0.362$ & $11$ & $3.38918$\\ \hline
$11$ & $0.304501$ & $5$ & $2.08272$ & $40$ & $0.353056$ & $9$ & $3.0431$ & $69$ & $0.362107$ & $11$ & $3.40159$\\ \hline
$12$ & $0.307588$ & $5$ & $2.15363$ & $41$ & $0.353415$ & $9$ & $3.06331$ & $70$ & $0.362319$ & $11$ & $3.4137$\\ \hline
$13$ & $0.312085$ & $5$ & $2.21591$ & $42$ & $0.353793$ & $9$ & $3.08272$ & $71$ & $0.362335$ & $12$ & $3.4137$\\ \hline
$14$ & $0.312612$ & $6$ & $2.21591$ & $43$ & $0.354069$ & $9$ & $3.10141$ & $72$ & $0.36252$ & $12$ & $3.42792$\\ \hline
$15$ & $0.315698$ & $6$ & $2.29786$ & $44$ & $0.354117$ & $9$ & $3.08272$ & $73$ & $0.362596$ & $12$ & $3.44169$\\ \hline
$16$ & $0.317262$ & $6$ & $2.36634$ & $45$ & $0.354156$ & $9$ & $3.10141$ & $74$ & $0.362686$ & $12$ & $3.45506$\\ \hline
$17$ & $0.31969$ & $6$ & $2.42553$ & $46$ & $0.354669$ & $9$ & $3.11942$ & $75$ & $0.36276$ & $12$ & $3.46804$\\ \hline
$18$ & $0.322604$ & $6$ & $2.47787$ & $47$ & $0.354711$ & $10$ & $3.11942$ & $76$ & $0.362864$ & $12$ & $3.48066$\\ \hline
$19$ & $0.326625$ & $6$ & $2.51135$ & $48$ & $0.355138$ & $10$ & $3.14117$ & $77$ & $0.36296$ & $12$ & $3.49295$\\ \hline
$20$ & $0.330221$ & $6$ & $2.47024$ & $49$ & $0.355319$ & $10$ & $3.16189$ & $78$ & $0.363111$ & $12$ & $3.50491$\\ \hline
$21$ & $0.331218$ & $7$ & $2.49716$ & $50$ & $0.355538$ & $10$ & $3.1817$ & $79$ & $0.36327$ & $12$ & $3.51658$\\ \hline
$22$ & $0.3337$ & $7$ & $2.5431$ & $51$ & $0.355729$ & $10$ & $3.20067$ & $80$ & $0.363562$ & $12$ & $3.52796$\\ \hline
$23$ & $0.335811$ & $7$ & $2.58496$ & $52$ & $0.356014$ & $10$ & $3.21888$ & $81$ & $0.363962$ & $12$ & $3.53908$\\ \hline
$24$ & $0.337042$ & $7$ & $2.62346$ & $53$ & $0.356308$ & $10$ & $3.2364$ & $82$ & $0.364342$ & $12$ & $3.54654$\\ \hline
$25$ & $0.338258$ & $7$ & $2.65915$ & $54$ & $0.356837$ & $10$ & $3.25328$ & $83$ & $0.365012$ & $13$ & $3.55392$\\ \hline
$26$ & $0.339076$ & $7$ & $2.69244$ & $55$ & $0.357549$ & $10$ & $3.26958$ & $84$ & $0.365385$ & $13$ & $3.56449$\\ \hline
$27$ & $0.340544$ & $7$ & $2.72367$ & $56$ & $0.358474$ & $10$ & $3.28097$ & $85$ & $0.365689$ & $13$ & $3.57483$\\ \hline
$28$ & $0.340679$ & $8$ & $2.72367$ & $57$ & $0.35939$ & $10$ & $3.26655$ & $86$ & $0.365865$ & $13$ & $3.58496$\\ \hline
$29$ & $0.341748$ & $8$ & $2.76289$ & $58$ & $0.359478$ & $11$ & $3.27655$ & \\ \hline
\end{tabular}
\vskip\baselineskip
\caption{\label{tab:entropy}
	A lower bound of the topological entropy of $P_a$ in different subintervals of the parameter range $[\amin,\amax]$. For the parameter values $a\in[a_i,a_{i+1}]$ with $a_{87}=\amax$, $P_a|_{\mathcal H_a}$ is semiconjugated to a shift dynamics $\sigma_i$ on $k_i$ symbols with  topological entropy $h_{top}(\sigma_i)$. See also Fig.~\ref{fig:c15} (top panel).}
\end{table}

\begin{remark}
	The data in Table~\ref{tab:entropy} returned by the validation algorithm (described in Section~\ref{sec:CAP}) is a lower bound for the complexity of dynamics of $P_a$ restricted to some invariant set $\mathcal H_a$ (not necessarily the maximal invariant set). Based on the approximate minima of the first iterate of $f_{a,z}$ given by (\ref{eq:faz}) the algorithm constructs and validates semiconjugacy of $P_a$ to a subshift of finite type. In order to apply computer-assisted reasoning we need a margin for accumulated errors coming from overestimation appearing in validated integration of ODEs and, most important, the fact that the algorithm always proceeds an interval of parameters rather than a single parameter. Therefore, the changes in the topological entropy observed in non-validated numerical simulation appear always for slightly smaller values of parameter than those presented in Table~\ref{tab:entropy}. Considering higher order iterates of $P_a$ would, perhaps, return a more accurate lower bound on topological entropy while requiring very large CPU time.
\end{remark}	

List of all transition matrices
 $M_i$, $i=1,\ldots,86$ is available in the supplementary material to this article \cite{github}. For $a=\amin$ the transition matrix is equal to
\begin{equation*}
	M_{1} = \begin{bmatrix} 0 & 1 \\ 1 & 1\end{bmatrix}
\end{equation*}
with $h_{top}(M_{1}) = \log_2\frac{1+\sqrt{5}}{2}\approx0.6942419136306173$. When $a$ grows, the number of symbols increases and the leading transition matrices in the sequence are
\begin{equation*}\scriptsize
	M_{2} = \begin{bmatrix} 0 & 1 & 1\\ 1 & 1 &1 \\ 1 & 0 & 0\end{bmatrix}, \quad 	
	M_{3} = \begin{bmatrix} 0 & 1 & 1\\ 1 & 1 &1 \\ 1 & 1 & 0\end{bmatrix}, \quad
	M_{4} = \begin{bmatrix} 0 & 1 & 1\\ 1 & 1 &1 \\ 1 & 1 & 1\end{bmatrix}, \quad	
	M_{5} = \begin{bmatrix} 0 & 1 & 1 & 1\\ 1 & 1 & 1 & 1 \\ 1 & 1 & 1 & 1 \\ 0 & 0 & 0 & 1\end{bmatrix}, \ldots
\end{equation*}
For $a\in[a_{86},\amax]$ the algorithm returned semiconjugacy of $P_a|_{\mathcal H_a}$ to a subshift of finite type with the transition matrix equal to 
\begin{equation}\label{eq:M86}\scriptsize
	M_{86} =
\left[
\begin{array}{ccccccccccccc}
	0 & 0 & 0 & 0 & 1 & 1 & 1 & 1 & 1 & 1 & 1 & 1 & 1 \\
	1 & 1 & 1 & 1 & 1 & 1 & 1 & 1 & 1 & 1 & 1 & 1 & 1 \\
	1 & 1 & 1 & 1 & 1 & 1 & 1 & 1 & 1 & 1 & 1 & 1 & 1 \\
	1 & 1 & 1 & 1 & 1 & 1 & 1 & 1 & 1 & 1 & 1 & 1 & 1 \\
	1 & 1 & 1 & 1 & 1 & 1 & 1 & 1 & 1 & 1 & 1 & 1 & 1 \\
	1 & 1 & 1 & 1 & 1 & 1 & 1 & 1 & 1 & 1 & 1 & 1 & 1 \\
	1 & 1 & 1 & 1 & 1 & 1 & 1 & 1 & 1 & 1 & 1 & 1 & 1 \\
	1 & 1 & 1 & 1 & 1 & 1 & 1 & 1 & 1 & 1 & 1 & 1 & 1 \\
	1 & 1 & 1 & 1 & 1 & 1 & 1 & 1 & 1 & 1 & 1 & 1 & 1 \\
	1 & 1 & 1 & 1 & 1 & 1 & 1 & 1 & 1 & 1 & 1 & 1 & 1 \\
	1 & 1 & 1 & 1 & 1 & 1 & 1 & 1 & 1 & 1 & 1 & 1 & 1 \\
	1 & 1 & 1 & 1 & 1 & 1 & 1 & 1 & 1 & 1 & 1 & 1 & 1 \\
	1 & 1 & 1 & 1 & 0 & 0 & 0 & 0 & 0 & 0 & 0 & 0 & 0 \\
\end{array}
\right]
\end{equation}
and with $h_{top}(M_{86}) = \log_2 12\approx3.5849625007211562$.

%%%%%%%%%%%%%%%%%%%%%%%%%%%%%%%%%%%%%%%%%%%%%%%%%%%%%%%%%%%%%%%%%%%%%%%%%%%%

\section{Computer-assisted proofs of main results.}
\label{sec:CAP}

Due to the impossibility to obtain analytical proofs, the proofs of Theorems~\ref{thm:attractor},~\ref{thm:template},~\ref{thm:bifurcation} and ~\ref{thm:entropy} are computer assisted --  that is we used a~computer to obtain guaranteed bounds on Poincar\'e map $P_a$ and its derivatives with respect to arguments and the parameter. From these bounds we conclude the assertions of Theorems~\ref{thm:attractor},~\ref{thm:template},~\ref{thm:bifurcation} and ~\ref{thm:entropy}.

We used the CAPD library \cite{capd}, which is a general purpose C++ tool for rigorous numerical analysis of dynamical systems. The library implements algorithms for integration of (higher order) variational equations for ODEs \cite{C1HO,CnLohner,C1Lohner} as well as computation of bounds on Poincar\'e maps and their derivatives \cite{poincare}.

We would like to emphasize, that the computations related to Theorems~\ref{thm:attractor},~\ref{thm:template},~\ref{thm:bifurcation} and ~\ref{thm:entropy} are quite demanding -- total time of computation was about 6 hours on a computer running $360$ parallel threads. Most of the time, however, was spent to obtain bounds for parameter range $a\geq 0.34$. Below this value, the program could be easily run on a laptop computer with 16 CPUs.

%%%%%%%%%%%%%%%%%%%%%%%%%%%%%%%%%%%%%%%%%%%%%%%%%%%%%%%%%%%%%%%%%%%%%%%%%%%%

\subsection{Proof of Theorem~\ref{thm:attractor}.}
Recall, the parameter range $\mathcal A$ is defined in (\ref{eq:arange}) and the trapping region $\mathcal T$ is defined in (\ref{eq:defTrappingRegion}). We have to show that for $a\in[\amin,\amax]$ the Poincar\'e map (\ref{eq:PoincareMap}) exists on $\mathcal T$ and $P_a(\mathcal T)\subset\mathrm{int}\mathcal T$.

Validation of the inclusion $P_a(\mathcal T)\subset\mathrm{int}\mathcal T$ is split into two steps.

\textbf{Step 1.}
	First we validate that for all $a\in \mathcal A$ and for all $u\in\mathcal T$ the Poincar\'e map $P_a(u)$ is defined. Thus no restriction on obtained bounds on $P_a(u)$ is given. For this purpose we cover the set $\mathcal A\times \mathcal T$ by some initial grid of boxes $A_i\times X_i\times Z_i$, $i=1,\dots,N$. Then for each $i$ we call a general routine from the CAPD library \cite{capd} that computes bound on $P_{A_i}(X_i,Z_i)$. By the construction of the algorithm from the CAPD library, if the procedure returns (any) bound, then the Poincar\'e map exists, and by implicit function theorem it is smooth on its domain. Otherwise, the algorithm throws an exception. In this case, we bisect the set $A_i\times  X_i\times Z_i$ in $(a,x)$ coordinates and repeat computation.
	
	Such subdivision is repeated until existence of Poincar\'e map is validated on each set in the subdivision or the maximal depth of subdivision is exceeded. In the last case we return \texttt{Failure} and stop computation.
	
	Running this algorithm we found a (non-uniform) cover of $\mathcal A\times \mathcal T $, which consists of $36\,316\,641$ boxes, on which the existence of Poincar\'e map has been validated.
	
\textbf{Step 2.}
	In the second step  we check that $P_a(\partial\mathcal T)\subset \mathrm{int}\mathcal T$ for $a\in \mathcal A$. After adaptive subdivision, as in the first step, we found a (non-uniform) cover $A_i\times X_i\times Z_i$, $i=1,\dots,9\,404\,150$ of $\mathcal A\times \partial\mathcal T$, such that $P_{A_i}(X_i\times Z_i)\subset \mathrm{int}\mathcal T$ for $i=1,\dots,9\,299\,383$.
	
	Given that $P_a$ is a diffeomorphism onto image, by the Jordan Theorem we conclude that $P_a(\mathcal T)\subset \mathrm{int}\mathcal T$, for $a\in\mathcal A$.
\qed

In Fig.~\ref{fig:enclosure} we show obtained bound on $P_{a}(\partial\mathcal T)$ for $a=\amin$ and $a=0.325$.
\begin{figure}
  \centerline{\includegraphics[width=.48\textwidth]{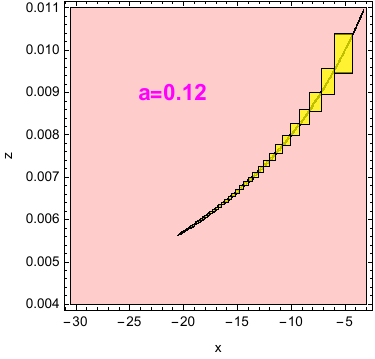}\includegraphics[width=.48\textwidth]{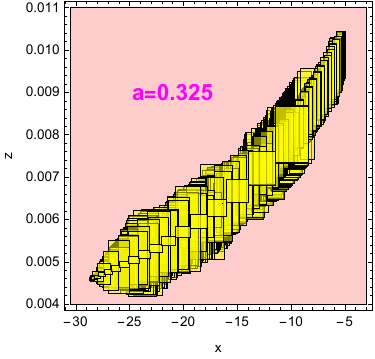}}
	\caption{Trapping region $\mathcal T$ is shown in pink. Computed bound on $P_{a}(\partial\mathcal T)$ for $a=\amin$ (left) and $a=0.325$ (right) is shown in yellow.
	\label{fig:enclosure}}
\end{figure}

%%%%%%%%%%%%%%%%%%%%%%%%%%%%%%%%%%%%%%%%%%%%%%%%%%%%%%%%%%%%%%%%%%%%%%%%%%%%

\subsection{Proof of Theorem~\ref{thm:template}.}
Put $X=[\xmin,\xmax]$ and $Z=[\zmin,\zmax]$. In Theorem~\ref{thm:template} we have to count the number of relevant extrema of the map $f_{a,z}:X\to X$, which is defined in (\ref{eq:faz}). For each parameter value (see (\ref{eq:templateParameters}))
$$a\in \{\amin, 0.2 ,0.26, 0.3, 0.32, 0.333, 0.345, 0.35, 0.356, 0.36, 0.362, \amax\}$$
we run an algorithm, which consists of the following steps.

\begin{itemize}
	\item Using standard bisection search we localize approximate extrema of $f_{a,z}$ in $X$ for $z\in\{\zmin,\zmax,\frac{1}{2}(\zmin+\zmax)\}$. Denote this (finite) set by $E$.
	\item We initially cover $X$ by subintervals $X_i$.
	\item If $X_i\cap E=\emptyset$, we compute bound on $f_{a,Z}(X_i)$ and $f'_{a,Z}(X_i)$. If computed bound on $f'_{a,Z}(X_i)$ contains zero, the interval $X_i$ is bisected and the procedure repeats until bound on derivative does not contain zero or the subdivision depth is exceeded.
	\item Similarly, if $X_i\cap E\neq \emptyset$ we request computation of bounds on $f_{a,Z}(X_i)$, $f'_{a,Z}(X_i)$ and $f''_{a,Z}(X_i)$.
	 If computed bound on $f''_{a,Z}(X_i)$ contains zero, the interval $X_i$ is bisected and the procedure repeats until bound on the second derivative does not contain zero or the subdivision depth is exceeded.
\end{itemize}
By the construction of the algorithm, if it stops and all tasks return nonzero bounds on first or second derivative of $f_{a,Z}$, respectively, the domain $X$ is covered by intervals $Y_i$ on which either $f_{a,Z}$ is monotone or convex/concave.

A sample output of the algorithm for $a=\amax$ is given in Table~\ref{tab:template}. From the bounds on $f'_{a,z}$ and $f''_{a,z}$ we see that the function has exactly $12$ local extrema in $X$. From the bound on $f_{a,z}$ (second column) and from Theorem~\ref{thm:attractor} we obtain
$$-30.53\leq \inf_{x\in X} f_{a,Z}(x)\leq -30.52\quad\text{and}\quad -5.75\leq \sup_{x\in X} f_{a,Z}(x)\leq -5.73.$$
From these bounds we conclude that all extrema found are relevant extrema and thus $\mathrm{relEx}(f_{\amax,z})= 12$ for $z\in[\zmin,\zmax]$.

Output of the program for remaining parameter values (\ref{eq:templateParameters}) is given in the supplementary material \cite{github}. From obtained bounds we can conclude the number of relevant extrema of $f_{a,z}$ as given in (\ref{eq:templateParameters}).\qed

\begin{remark}
	Note, that computation of $f''_{a,z}$ requires costly integration of the second order variational equation for (\ref{eq:rossler}).
\end{remark}

\begin{table}[htbp]
	\centering \scriptsize
	\begin{tabular}{c|c|c|c}
		$x$ & $f_{a,z}(x)$ & $f'_{a,z}(x)$ & $f''_{a,z}(x)$ \\ \hline\hline
$[-30.530000000000000, -30.234918150004624]$ & $[-30.19, -6.57]$ & $[0.001, 184]$ & $-$\\ \hline
$[-30.234918150004624, -30.234487860323142]$ & $[-6.58, -6.57]$ & $[-0.72, 0.58]$ & $[-1326, -325]$\\ \hline
$[-30.234487860323142, -29.92685079785111]$ & $[-30.44, -6.57]$ & $[-230, -0.001]$ & $-$\\ \hline
$[-29.92685079785111, -29.925276370697883]$ & $[-30.44, -30.43]$ & $[-0.89, 2.63]$ & $[101, 1193]$\\ \hline
$[-29.925276370697883, -29.433781614317695]$ & $[-30.44, -6.47]$ & $[0.0007, 122]$ & $-$\\ \hline
$[-29.433781614317695, -29.432550550415712]$ & $[-6.48, -6.47]$ & $[-0.77, 0.59]$ & $[-578, -68]$\\ \hline
$[-29.432550550415712, -28.936271584068677]$ & $[-30.45, -6.47]$ & $[-138, -0.009]$ & $-$\\ \hline
$[-28.936271584068677, -28.933364707472052]$ & $[-30.45, -30.44]$ & $[-0.87, 1.87]$ & $[73, 418]$\\ \hline
$[-28.933364707472052, -28.134274742819773]$ & $[-30.45, -6.36]$ & $[0.006, 75.5]$ & $-$\\ \hline
$[-28.134274742819773, -28.129878233661032]$ & $[-6.37, -6.36]$ & $[-1.2, 0.66]$ & $[-227, -18]$\\ \hline
$[-28.129878233661032, -27.315443495151083]$ & $[-30.47, -6.36]$ & $[-86.7, -0.04]$ & $-$\\ \hline
$[-27.315443495151083, -27.309055771711787]$ & $[-30.47, -30.45]$ & $[-1.66, 1.72]$ & $[23, 156]$\\ \hline
$[-27.309055771711787, -25.978724324556815]$ & $[-30.49, -6.23]$ & $[0.004, 31.4]$ & $-$\\ \hline
$[-25.978724324556815, -25.969275484986035]$ & $[-6.24, -6.23]$ & $[-0.83, 0.73]$ & $[-79, -9.6]$\\ \hline
$[-25.969275484986035, -24.583975022884346]$ & $[-30.49, -6.23]$ & $[-49.91, -0.01]$ & $-$\\ \hline
$[-24.583975022884346, -24.573109921220805]$ & $[-30.5, -30.45]$ & $[-1.1, 1.2]$ & $[7.2, 56]$\\ \hline
$[-24.573109921220805, -22.263096727912796]$ & $[-30.48, -6.05]$ & $[0.01, 17.7]$ & $-$\\ \hline
$[-22.263096727912796, -22.244978977377045]$ & $[-6.06, -6.05]$ & $[-0.68, 0.71]$ & $[-29.2, -0.2]$\\ \hline
$[-22.244978977377045, -19.75178920778561]$ & $[-30.5, -6.05]$ & $[-21.8, -0.02]$ & $-$\\ \hline
$[-19.75178920778561, -19.732234778220185]$ & $[-30.5, -30.49]$ & $[-0.36, 0.36]$ & $[0.15, 19]$\\ \hline
$[-19.732234778220185, -15.379434543948365]$ & $[-30.5, -5.74]$ & $[0.0007, 8.1]$ & $-$\\ \hline
$[-15.379434543948365, -15.345294934267804]$ & $[-5.75, -5.73]$ & $[-0.51, 0.51]$ & $[-7.9, -0.2]$\\ \hline
$[-15.345294934267804, -10.256384670433105]$ & $[-30.53, -5.74]$ & $[-7.8, -0.004]$ & $-$\\ \hline
$[-10.256384670433105, -10.216471648755972]$ & $[-30.54, -30.52]$ & $[-0.2, 0.2]$ & $[1.6, 2.6]$\\ \hline
$[-10.216471648755972, -3]$ & $[-30.54, -9.63]$ & $[0.01, 3.3]$ & $-$\\ \hline
\end{tabular}
\vskip\baselineskip
\caption{Bounds on $f_{a,z}$ and its derivatives for $a=\amax$ and $z\in[\zmin,\zmax]$.\label{tab:template}}
\end{table}

\subsection{Proof of Theorem~\ref{thm:bifurcation}.}
Before we describe the algorithm for validation of bifurcation and continuation of fixed points of $P_a$ we recall some standard tools for validation of the existence of branches of zeroes of smooth functions.

For a smooth function $F:D\subset \mathbb R^m\times \mathbb R^n\to\mathbb R^n$ and an interval vector (Cartesian product of closed intervals) $A\times X\subset D$ we set $$[D_XF(A,X)] := \mathrm{convexHull}\{D_xF(a,x) : a\in A, x\in X\}.$$

\begin{theorem}[Interval Newton Method \cite{Neumaier_1991}]\label{thm:INM}
Let $F:D\subset \mathbb R^n\to\mathbb R^n$ be a smooth function, $X\subset D$ be an interval vector and $x_0\in \mathrm{int}X$.
	If $[D_XF(X)]$ is nonsingular and
	\begin{equation}\label{eq:IMN}
	N(F,X,x_0) := x_0 - [D_XF(X)]^{-1}\cdot F(x_0)\subset \mathrm{int}X
	\end{equation}
	then $F$ has unique zero in $x^*\in X$. Moreover, $x^*\in N(F,X,x_0)$.
\end{theorem}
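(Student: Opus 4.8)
The plan is to reduce the statement to Brouwer's fixed point theorem via the integral mean value form of $F$, once the interval-arithmetic conventions have been pinned down. Write $\mathbf A:=[D_XF(X)]$, a closed box in $\mathbb R^{n\times n}$ (a Cartesian product of closed intervals, hence convex and compact); "nonsingular" means every matrix $A\in\mathbf A$ is invertible. The symbol $[D_XF(X)]^{-1}\cdot F(x_0)$ in (\ref{eq:IMN}) stands for an interval vector $W$ that encloses the solution set $\Sigma:=\{A^{-1}F(x_0):A\in\mathbf A\}$; this enclosure property is the only feature of the particular interval linear solver (interval Gaussian elimination, Gauss--Seidel, hull, \dots) that the proof uses, and it gives $N(F,X,x_0)=x_0-W\supseteq x_0-\Sigma$.

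First I would record the mean value form. For $x\in X$ set $M(x):=\int_0^1 DF\bigl(x_0+s(x-x_0)\bigr)\,ds$. Since $X$ is convex and $x_0,x\in X$, the whole segment lies in $X$, so each entry of $M(x)$ is an average of values of $\partial_jF_i$ taken on $X$ and therefore lies in the corresponding entry interval of $\mathbf A$; hence $M(x)\in\mathbf A$ for every $x\in X$, the map $x\mapsto M(x)$ is continuous, and the fundamental theorem of calculus yields $F(x)=F(x_0)+M(x)(x-x_0)$.

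Next I would build a continuous self-map of $X$ and invoke Brouwer. Define $g:X\to\mathbb R^n$ by $g(x):=x_0-M(x)^{-1}F(x_0)$; this is well defined and continuous because every $M(x)\in\mathbf A$ is invertible, $M(\cdot)$ is continuous, and matrix inversion is continuous. For each $x\in X$ we have $M(x)^{-1}F(x_0)\in\Sigma$, hence $g(x)\in x_0-\Sigma\subseteq N(F,X,x_0)\subseteq\mathrm{int}\,X\subseteq X$. As $X$ is a nonempty compact convex set, Brouwer's theorem produces a fixed point $x^*=g(x^*)$; then $M(x^*)(x^*-x_0)=-F(x_0)$, so $F(x^*)=F(x_0)+M(x^*)(x^*-x_0)=0$, and $x^*=g(x^*)\in N(F,X,x_0)$, which proves existence together with the localization claim.

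Uniqueness is the easy direction: if $x^*,y^*\in X$ both solve $F=0$, the two-point mean value form gives $\widetilde M:=\int_0^1 DF\bigl(y^*+s(x^*-y^*)\bigr)\,ds\in\mathbf A$ with $0=F(x^*)-F(y^*)=\widetilde M(x^*-y^*)$, and nonsingularity of $\mathbf A$ forces $x^*=y^*$. The only place demanding genuine care — rather than cleverness — is matching the algebraic expression $[D_XF(X)]^{-1}\cdot F(x_0)$ with the set $\Sigma$ and checking $M(x)\in\mathbf A$; both rest on $\mathbf A$ being a closed convex box and on $X$ being convex. Once the self-map property $g(X)\subseteq X$ is in place, the topological content is immediate, so I expect no further obstacle.
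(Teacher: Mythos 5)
The paper states this theorem as a cited result from Neumaier and gives no proof of its own, and your argument is correct: it is essentially the standard proof (mean value form $F(x)=F(x_0)+M(x)(x-x_0)$ with $M(x)\in[D_XF(X)]$, a Brouwer fixed point of $g(x)=x_0-M(x)^{-1}F(x_0)$ for existence and localization, and nonsingularity of the interval matrix for uniqueness). The only cosmetic mismatch is that the paper defines $[D_XF(X)]$ as the convex hull of the Jacobians rather than an entrywise box, but your key inclusion $M(x)\in[D_XF(X)]$ still holds there, since the integral of a continuous map into a compact set lies in its closed convex hull.
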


The following is a straightforward extension to the case of parameter dependent functions.
\begin{theorem}[Interval Newton Method for parameterized functions {\cite[Appendix A]{WalawskaWilczak2019}}]\label{thm:ParamINM}
	Let $F:D\subset \mathbb R^m\times \mathbb R^n\to\mathbb R^n$ be a smooth function, $A\times X\subset D$ be an interval vector and $x_0\in \mathrm{int}X$.
	If $[D_XF(A,X)]$ is nonsingular and
	\begin{equation}\label{eq:ParamIMN}
	N = N(F,A,X,x_0) := x_0 - [D_XF(A,X)]^{-1}\cdot F(A,x_0)\subset \mathrm{int}X
	\end{equation}
	then there is a smooth function $g:A\to N\subset X$ such that $F(g(x),x)\equiv 0$. Moreover, if $F(a,x)=0$ for some $(a,x)\in A\times X$ then $a=g(x)$.
\end{theorem}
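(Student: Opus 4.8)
The plan is to obtain Theorem~\ref{thm:ParamINM} from the unparameterized Interval Newton operator of Theorem~\ref{thm:INM}, applied fiberwise over the parameter box $A$, and then to upgrade the resulting pointwise solution map to a smooth map by the classical implicit function theorem.

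First I would fix a parameter value $a\in A$ and work with the slice $F_a:=F(a,\cdot):X\to\mathbb R^n$. Since $\{D_xF(a,x):x\in X\}\subseteq\{D_xF(a',x):a'\in A,\ x\in X\}$, inclusion isotonicity of the convex hull gives $[D_XF_a(X)]\subseteq[D_XF(A,X)]$; hence the assumed nonsingularity of $[D_XF(A,X)]$ (read as: every member is invertible) is inherited by $[D_XF_a(X)]$. Likewise $F_a(x_0)=F(a,x_0)$ lies in the enclosure $F(A,x_0)$, and because the interval matrix inverse and the interval matrix--vector product used to define $N$ are inclusion isotone, the fiberwise Newton set satisfies
\begin{align*}
N(F_a,X,x_0)&=x_0-[D_XF_a(X)]^{-1}\cdot F_a(x_0)\\
&\subseteq\ x_0-[D_XF(A,X)]^{-1}\cdot F(A,x_0)=N\subset\mathrm{int}X.
\end{align*}
Theorem~\ref{thm:INM} then applies to $F_a$ on $X$ with centre $x_0$ and yields a \emph{unique} zero $x^*(a)\in X$ of $F_a$, with $x^*(a)\in N(F_a,X,x_0)\subseteq N$. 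Defining $g(a):=x^*(a)$ gives a map $g:A\to N\subset X$ with $F(a,g(a))=0$ for every $a\in A$; and if $F(a,x)=0$ for some $(a,x)\in A\times X$, then $x$ is a zero of $F_a$ in $X$, so fiberwise uniqueness forces $x=g(a)$. This establishes the existence of $g$ and the uniqueness (``moreover'') claim.

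It remains to verify smoothness of $g$. For each $a\in A$ the matrix $D_xF(a,g(a))$ belongs to $[D_XF_a(X)]\subseteq[D_XF(A,X)]$ and is therefore invertible, so the implicit function theorem applies at $(a,g(a))$ and produces a smooth local solution $\tilde g$ of the equation $F(\cdot,\tilde g(\cdot))=0$ with $\tilde g(a)=g(a)$. The local uniqueness in the implicit function theorem together with the global fiberwise uniqueness established above force $\tilde g$ to coincide with $g$ near $a$; since $a\in A$ was arbitrary, $g$ is locally the restriction of a smooth function, hence smooth (and in fact extends to a smooth map on an open neighbourhood of $A$).

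The only genuinely delicate point is the first part of the argument: one has to make precise that ``nonsingular'' means ``every matrix in the hull is invertible'', that the concrete interval-arithmetic realizations of $[\cdot]^{-1}$ and of the product defining $N$ are inclusion isotone, and hence that the fiber operator $N(F_a,X,x_0)$ is genuinely contained in the parameterized operator $N$. Once this bookkeeping is in place, the statement reduces to a single application of Theorem~\ref{thm:INM} followed by the implicit function theorem, with no further topological or degree-theoretic input required.
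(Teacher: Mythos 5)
Your proof is correct. The paper does not actually prove Theorem~\ref{thm:ParamINM} --- it is quoted from \cite[Appendix A]{WalawskaWilczak2019} as a ``straightforward extension'' of Theorem~\ref{thm:INM} --- and your argument (inclusion isotonicity to reduce to a fiberwise application of the unparameterized interval Newton operator, fiberwise uniqueness for the ``moreover'' clause, and the implicit function theorem glued against that uniqueness for smoothness) is exactly the standard route one would expect that appendix to take; you also correctly read through the notational slip in the statement, where $F(g(x),x)\equiv 0$ and $a=g(x)$ should be $F(a,g(a))\equiv 0$ and $x=g(a)$.
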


Let us fix parameter $a=a_i$ as in Table~\ref{tab:BifPoints}. In the description of the algorithm we provide some data (numbers) obtained in validation of the first bifurcation point $(x_1,z_1,a_1)$ -- the remaining can be found in the supplementary material \cite{github}.

The algorithm which validates the existence of a saddle-node bifurcation and the existence of two branches of fixed points as in Theorem~\ref{thm:bifurcation} consists of the following steps.

\textbf{Step 1:}
	Validation of bifurcation point. Define
	\begin{eqnarray*}
	u_i &=& (x_i,z_i,a_i),\\ 	
	W_i &=& u_i +  [-1,1]\cdot(3\cdot 10^{-11},10^{-14},6\cdot 10^{-14}).
	\end{eqnarray*}
	Let $g$ be defined as in (\ref{eq:BifEquation}). Using algorithms from the CAPD library we compute the Interval Newton Operator (\ref{eq:IMN}) and we checked $N(g,W_i,u_i)\subset \mathrm{int}W_i$. Thus, by means of Theorem~\ref{thm:INM}, there is an unique $(x_i^*,z_i^*,a_i^*)\in W_i$ such that
	$$
	P_{a^*_i}(x_i^*,z_i^*)= (x_i^*,z_i^*)\quad \text{and}\quad 1\in \mathrm{Sp}(DP_{a^*_i}(x_i^*,z_i^*)).
	$$
	Applying Gershgorin estimate to obtained bound on the derivative $DP_{a^*_i}(x_i^*,z_i^*)$ we have checked that the second eigenvalue $\lambda_i$ of $DP_{a^*_i}(x_i^*,z_i^*)$ satisfies $|\lambda_i|\leq 2\cdot 10^{-4}$.

\textbf{Step 2:}
	Validation of a short curve of fixed points of $P_a$ near $(x_i^*,z_i^*,a_i^*)$.
	
	Define a map $F(x,z,a) = P_a(x,z)-(x,z)$. We impose that the set of zeroes of this function can be parameterized locally near $(x_i^*,z_i^*,a_i^*)$ as a smooth curve of the form
	$$u(x)=(x,z(x),a(x)).$$
	In order to apply the Interval Newton Method (Theorem~\ref{thm:ParamINM}) we define an explicit set
	$$D_i=X_i\times Z_i\times A_i = (x_i,z_i,a_i) + [-1,1]\cdot(\Delta_i^x,\Delta_i^z,\Delta_i^a).$$
	Using algorithms from the CAPD library we compute the Interval Newton Operator (\ref{eq:ParamIMN}) for the map $g$ and we check that
	$$N(g,X_i,D_i,(x_i,z_i,a_i)) \subset \mathrm{int}\left(Z_i\times A_i\right).$$
	From Theorem~\ref{thm:INM} we conclude, that all zeroes of $g$ in $D_i$ form a graph of a smooth function $u(x)=(x,z(x),a(x))$, $x\in x_i +[-1,1]\Delta_i^x$.

	Actual (hand adjusted) diameters for $i=1$ are $$(\Delta_1^x,\Delta_1^z,\Delta_1^a) = (5\cdot 10^{-4}, 3\cdot 10^{-7} ,2\cdot 10^{-5})$$
	and computed bound on the Interval Newton Operator (\ref{eq:ParamIMN}) is
	\begin{equation*}
		N(g,X_1,D_1,(x_1,z_1,a_1))\subset (z_1,a_1)- [-1,1](2.1\cdot 10^{-7},1.15\cdot 10^{-5}).
	\end{equation*}	
		
\textbf{Step 3:} Validation of saddle-node bifurcation point $(x^*_i,z^*_i,a^*_i)$.
	
	From \textbf{Step 1} and \textbf{Step 2} and the uniqueness property of the Interval Newton Method we know that $a^*_i = a(x_i^*)$ and $z^*_i = z(x_i^*)$. We would like to show, that the function $a(x)$ has a unique minimum in $x^*_i$ and it is a convex function in $X_i$. For this purpose we again apply Theorem~\ref{thm:INM} to obtain tighter bounds on $a'(x_i-\Delta_i^x)$ and $a'(x_i+\Delta_i^x)$. Then we check if these derivatives are of opposite signs. For $i=1$ we obtain bounds
	\begin{equation*}
	a'(x_1-\Delta_1^x) \in [-3.3,-3.2]\cdot 10^{-7},\quad a'(x_1+\Delta_1^x) \in [3.2,3.3]\cdot 10^{-7}.
	\end{equation*}
	Finally, we check that $a''(X_i)>0$ and thus $a$ is a convex function. For $i=1$ we obtain a bound
	$$
	a''(X_1)\subset [0.05,0.08].
	$$
	Derivatives $a'$ and $a''$ are computed by differentiating the identity
	\begin{equation}\label{eq:BifIdentity}
	g(x,z(x),a(x))\equiv 0.
	\end{equation}
	From all these bounds we conclude that the function $a(x)$ has unique minimum in $X_i$, say $\widehat a_i=a(\widehat x_i)$ with corresponding $\widehat z_i = z(\widehat x_i)$. Our aim is to show that $\widehat x_i=x_i^*$. Differentiation of (\ref{eq:BifIdentity}) and $\det \left(DP_{a_i^*}(x_i^*,z_i^*)-\mathrm{Id}\right)=0$ gives $a'(x_i^*)=0$. Since $a''(X_i)>0$ the derivative $a'$ is monotone in $X_i$ and thus $0=a'(\widehat x_i)= a'(x_i^*)$ implies $x_i^*=\widehat x_i$.
	
	From the above considerations we conclude, that the set of fixed points of $P_a$ in $D_i$ can be parameterized as the union of graphs of two functions
	\begin{eqnarray*}
		L: [a_i^*,a_i(x_i-\Delta_i^x)]\ni a\to (x(a),z(a)) \in X\times Z,\\
		R: [a_i^*,a_i(x_i+\Delta_i^x)]\ni a\to (x(a),z(a)) \in X\times Z.
	\end{eqnarray*}

\textbf{Step 4:} Continuation of branches $L$ and $R$ until $a=\amax$.

	In this step we make an adaptive subdivision of the parameter ranges
	\begin{eqnarray*}
	A_i^L&=&[a_i(x_i-\Delta_i^x),\amax]\quad\text{and}\\
	A_i^R&=&[a_i(x_i+\Delta_i^x),\amax].
	\end{eqnarray*}	
	We start from an initial cover of $A_i^L$ and $A_i^R$ by intervals and we try to validate the existence of a branch of zeroes of $g$ in each subinterval using Interval Newton Operator -- see Theorem~\ref{thm:ParamINM}. If the verification step fails, we bisect the parameter range and repeat the computation. Such subdivision stops if either in each subinterval we could validate the existence of a branch of fixed points for $P_a$ or the maximal depth of subdivisions is exceeded.
	
	In each case $i=1,\dots,5$ the algorithm returned covers
	\begin{eqnarray*}
	A_i^L&=&[a_i(x_i-\Delta_i^x),\amax]\subset \bigcup_{k=1}^{K_i^L}[a_{i.k-1}^L,a_{i,k}^L]\quad\text{and}\\
	A_i^R&=&[a_i(x_i+\Delta_i^x),\amax]\subset \bigcup_{k=1}^{K_i^R}[a_{i,k-1}^R,a_{i,k}^R]
	\end{eqnarray*}	
	such that on each subinterval the existence of a branch of fixed points of $P_a$ has been validated. In each case $i=1,\dots,5$ the number subintervals $K_i^L$ and $K_i^R$ exceeds $10^4$.
	
\textbf{Step 5:} Verification of smoothness of branches $L$ and $R$.
	
	We have to check if the segments of $L$ and $R$ on each subinterval merge into a smooth curve. For this purpose we additionally compute (using again the Interval Newton Method and Theorem~\ref{thm:INM}) a tight bound on $L(a_{i,k}^L)$, $k=1,\ldots,K_L-1$ and we show that $L(a_L^k)$ belongs to computed bounds for segments $[a_{i,k-1}^L,a_{i,k}^L]$ and $[a_{i,k}^L,a_{i,k+1}^L]$. From the uniqueness property of the Interval Newton Method we conclude, that these segments merge into a continuous curve, which is smooth by the implicit function theorem. Similarly for the branch $R$.
	
	Finally, we have to repeat the argument to obtain smoothness at $a_{i,0}^L=a_i(x_i-\Delta_i^x)$ and $a_{i,0}^R=a_i(x_i+\Delta_i^x)$. This time we have to use a bound from the verification on the curve of fixed points $u(x)$ from \textbf{Step 2} and parameterized by $x\in X_i$.
	
	\qed

\subsection{Proof of Theorem~\ref{thm:entropy}.}
The proof of Theorem~\ref{thm:entropy} relies on automatic (algorithmic) construction and verification of semiconjugacy of $P_a$ to a subshift of finite type. For this purpose we use the method of covering relations introduced for two-dimensional maps by Zgliczy\'nski \cite{Z} and later extended to multidimensional case in \cite{ZGi}. It is also closely related to the method of correctly aligned windows by Easton \cite{E}.

Since $P_a$ is a two-dimensional map, we recall here the definition from \cite{Z} and simplify it to the settings of the Poincar\'e map $P_a$.
\begin{definition}
	Let $|N|=[a,b]\times [c,d]$ be a rectangle and put
	\begin{eqnarray*}
		N^{le}&=&\{a\}\times[c,d],\\
		N^{re}&=&\{b\}\times[c,d],\\
		N^{l}&=&(\infty,a)\times[c,d],\\
		N^{r}&=&(b,\infty)\times[c,d],\\
		N^s&=& \mathbb R\times (c,d).
	\end{eqnarray*}
	The tuple $N=(|N|,N^{le},N^{re},N^l,N^r,N^s)$ is called an $h$-set.
\end{definition}

Assume $N_1,\ldots, N_k$, $k\geq 1$ are pairwise disjoint $h$-sets. Put $D=\bigcup_{i=1}^k|N_i|$ and let $f:D\to\mathbb R^2$ be continuous. Denote by $\mathrm{Inv}(f,D)\subset D$ the maximal invariant set for $f$ in $D$. Because the sets are pairwise disjoint, for any $x\in \mathrm{Inv}(f,D)$ there is a unique sequence $(x_{i_j})_{j\in\mathbb Z}$ such that
\begin{itemize}
	\item $x_{i_0}=x$,
	\item $x_{i_j}\in |N_{i_j}|$, $j\in\mathbb Z$,
	\item $f(x_{i_j})=x_{i_{j+1}}$, $j\in\mathbb Z$.
\end{itemize}
The above defines a mapping $\pi:\mathrm{Inv}(f,D)\ni x\to (i_j)_{j\in\mathbb Z}\in \{1,\ldots,k\}^\mathbb {Z}$.

\begin{definition}\label{def:covrel}
	Let $f:D\subset \mathbb R^2\to\mathbb R^2$ be a continuous map and let $N_1$, $N_2$ be $h$-sets (can be the same). We say that the set $N_1$ $f$-covers $N_2$, denoted by $N_1\cover{f}N_2$, if $|N_1|\subset \mathrm{dom}(f)$, $f(|N_1|)\subset N_2^s$ and
	\begin{enumerate}
		\item either $f(N_1^{re})\subset N_2^r$ and $f(N_1^{le})\subset N_2^l$
		\item or $f(N_1^{re})\subset N_2^l$ and $f(N_1^{le})\subset N_2^r$.
	\end{enumerate}	
\end{definition}

The following theorem is a special case of result from \cite{Z,ZGi,E} about the method of covering relations.
\begin{theorem}\label{thm:covrel}
Let $N_1,\ldots, N_k$ be pairwise disjoint $h$-sets and let $M\subset \mathbb{R}^{k\times k}$ be a transition matrix defined in the following way
\begin{equation*}
	M_{ij}=\begin{cases}
		1 & \text{if } N_i\cover{f} N_j,\\
		0 & \text{otherwise}.
	\end{cases}
\end{equation*}
Put $\mathcal I = \mathrm{Inv}(f,\bigcup_{i=1}^k|N_i|)$. Then $\Sigma_M\subset \pi\big(\mathcal I)$ (see (\ref{eq:transitionMatrix}) for the definition of $\Sigma_M$). Moreover, if $c=(i_j)\in \Sigma_M$ is a periodic sequence of principal period $n$, then there is $x \in\pi^{-1}(c)\in \mathcal I$, such that $f^n(x)=x$ and $n$ is a principal period for $x$.
\end{theorem}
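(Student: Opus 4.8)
The plan is to derive both assertions from a single ``chaining'' mechanism: a covering relation transports \emph{horizontal segments} of $N$ (continuous curves $\gamma\colon[0,1]\to|N|$ with $\gamma(0)\in N^{le}$ and $\gamma(1)\in N^{re}$) to horizontal segments of the target, and a \emph{loop} of covering relations additionally carries a nonzero Brouwer degree, which forces a fixed point of the corresponding return map. First I would prove the one--step lemma: if $N_1\cover{f}N_2$ and $\gamma$ is a horizontal segment of $N_1$, then there is a subinterval $[s,t]\subset[0,1]$ such that $f\circ\gamma|_{[s,t]}$ is a horizontal segment of $N_2$. Assume case (1) of Definition~\ref{def:covrel}; case (2) is reduced to it by reversing the orientation of $\gamma$. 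Writing $|N_2|=[a_2,b_2]\times[c_2,d_2]$ and $F=\pi_x\circ f\circ\gamma$, the inclusions $f(N_1^{le})\subset N_2^l$, $f(N_1^{re})\subset N_2^r$ give $F(0)<a_2$ and $F(1)>b_2$; putting $s=\sup\{u:F(u)\le a_2\}$ and $t=\inf\{u>s:F(u)\ge b_2\}$, continuity yields $s<t$, $F(s)=a_2$, $F(t)=b_2$ and $F([s,t])\subset[a_2,b_2]$, while $f(|N_1|)\subset N_2^s$ keeps the second coordinate of $f\circ\gamma$ in $(c_2,d_2)$. Hence $f\circ\gamma([s,t])\subset|N_2|$ and it joins $N_2^{le}$ to $N_2^{re}$, as claimed.

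Iterating this lemma along a finite admissible word $N_{j_0}\cover{f}N_{j_1}\cover{f}\cdots\cover{f}N_{j_m}$, starting from the straight horizontal segment of $N_{j_0}$, produces nested nonempty compact parameter intervals $[0,1]\supset I_1\supset\cdots\supset I_m$ with $f^l(\gamma(I_m))\subset|N_{j_l}|$ for $l=0,\dots,m$ (note $f^l$ is well defined on $\gamma(I_l)$ because each covering relation requires $|N_{j_l}|\subset\operatorname{dom}f$); any parameter in $I_m$ gives a point realizing the word. Now given $(i_j)\in\Sigma_M$, apply this to the truncations $i_{-n},\dots,i_n$ to obtain, after shifting by $n$, points $x_n\in|N_{i_0}|$ with $f^j(x_n)\in|N_{i_j}|$ for $|j|\le n$. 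Since only finitely many $h$-sets occur, $|N_{i_0}|$ is compact; a subsequential limit $x$ satisfies $f^j(x)\in|N_{i_j}|$ for every $j\in\mathbb Z$ (each $|N_{i_j}|$ is closed and $f$ is continuous), so $x\in\mathcal I$ and $\pi(x)=(i_j)$. This proves $\Sigma_M\subset\pi(\mathcal I)$.

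For the periodic statement, let $c=(i_j)$ have principal period $n$, so we have a loop $N_{i_0}\cover{f}N_{i_1}\cover{f}\cdots\cover{f}N_{i_{n-1}}\cover{f}N_{i_0}$. After an orientation--respecting affine rescaling of each $|N_i|$ to $[-1,1]^2$ sending $N_i^{le},N_i^{re},N_i^s$ to the standard left edge, right edge, and open horizontal strip, each covering relation says that $\pi_x\circ f$ is $>1$ on one vertical edge and $<-1$ on the other, while $\pi_y\circ f$ stays in $(-1,1)$. These are precisely the boundary conditions needed to homotope the (possibly only partially defined) $n$-fold composition $f^n$ on $|N_{i_0}|$, through maps satisfying the same inequalities, to the model hyperbolic linear map $(x,y)\mapsto(2x,y/2)$ (or its horizontal reflection), without creating fixed points on $\partial|N_{i_0}|$. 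Invariance of the fixed--point index then gives $\operatorname{ind}(f^n,\operatorname{int}|N_{i_0}|)=\pm1\neq0$, hence a point $x\in\operatorname{int}|N_{i_0}|$ with $f^n(x)=x$ and $f^j(x)\in|N_{i_j}|$ for $j=0,\dots,n-1$; extending by $n$-periodicity of $(i_j)$, $f^j(x)\in|N_{i_j}|$ for all $j$, i.e. $x\in\pi^{-1}(c)\cap\mathcal I$. Finally, if the principal period $m$ of $x$ were a proper divisor of $n$, then $f^j(x)=f^{j+m}(x)\in|N_{i_j}|\cap|N_{i_{j+m}}|$ for all $j$, and pairwise disjointness of the $h$-sets would force $i_j=i_{j+m}$, contradicting that $n$ is the principal period of $c$; hence $x$ has principal period exactly $n$.

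The routine parts are the one--step lemma and the compactness/diagonal passage to bi-infinite sequences. The genuine obstacle is the fixed--point--index (equivalently, Brouwer degree of $\mathrm{id}-f^n$) computation for the loop: one must set up the index for a map that need not be defined on all of $|N_{i_0}|$ and whose image may leave the rectangle in the horizontal direction, and verify that the covering--relation inequalities persist along the straightening homotopy so that no spurious boundary fixed points appear. This is exactly the content of the covering--relations machinery of Zgliczy\'nski \cite{Z,ZGi} and of Easton's correctly aligned windows \cite{E}, from which the theorem follows as the stated special case; I would therefore present the index argument only in outline and cite \cite{Z,ZGi,E} for the details.
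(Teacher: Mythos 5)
The paper gives no proof of this theorem at all: it is stated as a special case of the covering-relations results of \cite{Z,ZGi,E}, and your outline is precisely the standard argument from those sources --- the horizontal-segment chaining plus compactness/diagonal limit for $\Sigma_M\subset\pi(\mathcal I)$ is correct, the minimal-period argument via disjointness of the $|N_i|$ is correct, and you rightly identify the fixed-point-index (local Brouwer degree) computation along the loop as the only genuinely nontrivial step, deferring it to the same references the paper cites. So your proposal is correct and takes essentially the same route, just with more of the routine scaffolding written out than the paper bothers to include.
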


Theorem~\ref{thm:covrel} provides a tool for proving semiconjugacy of a map $f$ to a subshift of finite type and thus bounding from below topological entropy of maps by the topological entropy of the shift map, which is easily computable -- see Theorem~\ref{thm:shiftEntropy}. It suffices to construct suitable $h$-sets and validate covering relations between them. In the case of the R\"ossler system we succeed to do it in an automatic and algorithmic way. Below we describe the main steps of the algorithm, although it is not possible to present all heuristics that make the computation eventually completed.

\textbf{Proof of Theorem~\ref{thm:entropy}}. Let $[a_l,a_r]\subset [\amin,\amax]$ be an interval of parameters. First we construct $h$-sets for further validation of covering relations. This computation is nonrigorous and consists of the following steps.
\begin{enumerate}
	\item Set $a_m = \frac{1}{2}(a_l+a_r)$, $z_{\mathrm{mid}}=\frac{1}{2}(\zmin+\zmax)$.
	\item Find approximate extrema of $f_{a,z}$ defined in (\ref{eq:faz}) for $(a,z)\in\{a_l,a_m,a_r\}\times\{\zmin,z_{\mathrm{mid}},\zmax\}$. Notice, that the number of relevant extrema of $f_{a,z}$ may be different depending on $(a,z)$. Let $x_1>\dots>x_k$ be the approximate relevant extrema, that are present for all choices of $(a,z)$.
	\item Define $|N_i|=[x_{i}+\varepsilon,x_{i-1}-\varepsilon]\times[\zmin,\zmax]$ for $i=2,\ldots, k$, where $\varepsilon>0$ is a very small number, for example the machine epsilon $\varepsilon=2^{-52}$.
	\item From the construction, the sets $|N_i|$ are pairwise disjoint. Since $x_i$ are approximate subsequent extrema of $f_{a,z}$, we expect that  $N_{i}\cover{P_a}N_j$ for all $i,j=\{2,\ldots,k\}$.
	\item We extend the sequence $x_i$ by $x_0>x_1$ and $x_{k+1}<x_k$ and define $|N_1|=[x_1+\varepsilon,x_0]\times[\zmin,\zmax]$, $|N_{k+1}|=[x_{k+1} ,x_{k}-\varepsilon]\times[\zmin,\zmax]$. The point $x_0$ is chosen so that the image $P_a(|N_1|)$ spans across as much as possible of remaining sets $N_2,\ldots, N_{k}$ -- see Fig.~\ref{fig:covrel} and the location of $B_0$ and $B_{\text{odd}}$. It is an indication that for $a=\amax$ the image $P_{\amax}(|N_1|)$ spans across the sets $N_i$, $i=5,\ldots,12$. Similarly, $x_{k+1}$ is chosen so that the image of $P_{a}(|N_{k+1}|)$ spans across as many as possible of sets $N_1$,\ldots $N_{k+1}$. As an example, see the location of $B_{13}$ and $B_{\text{even}}$ in Fig.~\ref{fig:covrel}, which indicates that for $a=\amax$ the image $P_{a}(|N_{13}|)$ spans across $N_i$, $i=1,\ldots,4$.
\end{enumerate}
After $h$-sets $N_i$, $i=1,\ldots, k+1$ are constructed, we eventually have to rigorously compute transition matrix of covering relations. Observe, that from Theorem~\ref{thm:attractor} the condition $P_a(|N_i|)\subset N_j^s$ from Definition~\ref{def:covrel} is always satisfied for every choice of $i,j=1\dots,k+1$. Thus, in order to check $N_{i}\cover{P_a}N_j$ we have to compute bounds on $P_a(N_{i}^{le})$ and $P_a(N_{i}^{re})$ and check if
\begin{itemize}
	\item either $P_a(N_i^{re})\subset N_j^r$ and $f(N_i^{le})\subset N_j^l$
	\item or $f(N_i^{re})\subset N_j^l$ and $f(N_i^{le})\subset N_j^r$.
\end{itemize}

As an example, we present the data from the computation for a single parameter value $a=\amax$. In the first nonrigorous step, the algorithm returns the following sequence
\begin{equation}\label{eq:edges}
\begin{array}{lclclcl}
x_{0}&=&-6.579089092895479, &
x_{1}&=&-10.23628952536583,\\
x_{2}&=&-15.36224942593575, &
x_{3}&=&-19.74191129794121,\\
x_{4}&=&-22.25394390549659, &
x_{5}&=&-24.5784539999485,\\
x_{6}&=&-25.97440321955681, &
x_{7}&=&-27.31216688246727,\\
x_{8}&=&-28.13292349762917, &
x_{9}&=&-28.93522295279503,\\
x_{10}&=&-29.43325241560936, &
x_{11}&=&-29.92639805216789, \\
x_{12}&=&-30.23464037203789, &
x_{13}&=&-30.43604163408427.
\end{array}
\end{equation}
which is used to define $h$-sets $N_i$, $i=1,\ldots,13$ -- see Fig.~\ref{fig:covrel}. Then we compute bounds
$$X_i:=\pi_xP_{\amax}\big(x_i+[-2^{-52},2^{-52}],[\zmin,\zmax]\big),\quad i=0,\ldots, 13$$
and we obtain
\begin{equation}\label{eq:boundX}
    \begin{array}{lcl}
 X_{0} & \subset & [-21.0806926149584, -21.07966553069346], \\
X_{1} & \subset & [-30.52865001671991, -30.52864985142989], \\
X_{2} & \subset & [-5.74000870675155, -5.740008372417317], \\
X_{3} & \subset & [-30.49586235784065, -30.49586197137184], \\
X_{4} & \subset & [-6.053862270340931, -6.053861581770914], \\
X_{5} & \subset & [-30.47672694294398, -30.47672596112965], \\
X_{6} & \subset & [-6.230878793151707, -6.230877235952661], \\
X_{7} & \subset & [-30.46181095509974, -30.46180845922029], \\
X_{8} & \subset & [-6.363335405504911, -6.363331754784139], \\
X_{9} & \subset & [-30.44856969865465, -30.44856340550682], \\
X_{10} & \subset & [-6.475174472584746, -6.475165785873566], \\
X_{11} & \subset & [-30.43605932780109, -30.43604364795888], \\
X_{12} & \subset & [-6.575428749700791, -6.575407979118147], \\
X_{13} & \subset & [-22.77372275019209, -22.75909669369429], \\
\end{array}
\end{equation}
The location of line segments $\{x_i\}\times[\zmin,\zmax]$ and the corresponding bounds $B_i=X_i\times Z=P_{\amax}\big(x_i+[-2^{-52},2^{-52}],[\zmin,\zmax]\big), i=0,\ldots, 13$ is shown in Fig.~\ref{fig:covrel}.

From (\ref{eq:edges}) and (\ref{eq:boundX}) we see that
\begin{itemize}
	\item $X_i>x_0$ for $i=2,4,\ldots,12$,
	\item $X_i<x_{13}$ for $i=1,3,\ldots,11$,
	\item $X_0>x_4$ and
	\item $X_{13}<x_4$.
\end{itemize}
	From these inequalities we conclude that $N_i\cover{P_{\amax}}N_j$ if $i\neq 1$, $i\neq 13$ and $j=1,\ldots,12$. From the bounds $X_0,X_1,X_{12}$ and $X_{13}$ we also see that $N_1\cover{P_{\amax}}N_j$ for $j\geq 5$ and $N_{13}\cover{P_{\amax}}N_j$ for $j=1,\ldots,4$. According to Theorem~\ref{thm:covrel} we obtain that $P_{\amax}$ in restriction to $\mathrm{Inv}\big(P_{\amax},\bigcup_{i=1}^{13}|N_i|\big)$ is semiconjugated to a shift dynamics with transition matrix $M_{86}$ as defined in (\ref{eq:M86}).

Running the above algorithm with an adaptive subdivision of the parameter range $[\amin,\amax]$ we obtain semiconjugacy of $P_a$ on the corresponding invariant set to a subshift of finite type with topological entropy as listed in Table~\ref{tab:entropy}. \qed

\begin{figure}[htbp]
  \centerline{\includegraphics[width=.75\textwidth]{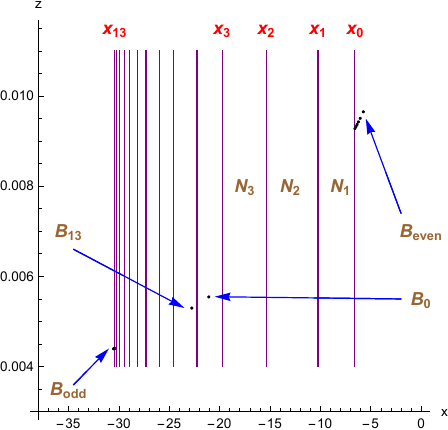}}
  \caption{Location of the edges $x_i$ (see (\ref{eq:edges})). Rigorous tiny bounds $B_i=X_i\times Z=P_{\amax}\big(x_i+[-2^{-52},2^{-52}],[\zmin,\zmax]\big), i=0,\ldots, 13$ are marked by dots -- see also (\ref{eq:boundX}).\label{fig:covrel}}
 \end{figure}

\section{Conclusions}

The concept of topological entropy is one of the most important topological invariants in dynamical systems theory, as it is one of the possible ways to measure the complexity of the dynamics.
Some landmark results by Milnor and Thurston focus on the theoretical understanding of topological entropy in maps (discrete-time systems).
Using the classical R\"ossler system as an example, we have presented an algorithmic approach to demonstrate the existence of global changes of the form of attractors in continuous-time systems and to rigorously prove lower bounds for topological entropy values. This has allowed us to show their growth when a system parameter is changed. We have proved that there exists a sequence of saddle-node bifurcations, which give rise to the semiconjugacy of a certain Poincar\'e map to symbolic dynamics in symbols from $2$ to $13$. This sequence of bifurcations leads to a growth of topological entropy. Due to the impossibility of obtaining pure analytic proofs, all theorems are obtained using computer-assisted techniques. Furthermore, we show that for an explicit range of parameters there exists a chaotic attractor.

\section*{Acknowledgments}
RB and SS have been supported by the Spanish Research projects PID2021-122961NB-I00 and PID2024-156032NB-I00 and the European Regional Development Fund and Diputaci\'on General de Arag\'on (E24-23R).

\bibliographystyle{elsarticle-num}
\bibliography{ref_wsb}

\begin{thebibliography}{10}
\expandafter\ifx\csname url\endcsname\relax
  \def\url#1{\texttt{#1}}\fi
\expandafter\ifx\csname urlprefix\endcsname\relax\def\urlprefix{URL }\fi
\expandafter\ifx\csname href\endcsname\relax
  \def\href#1#2{#2} \def\path#1{#1}\fi

\bibitem{Ent}
R.~L. Adler, A.~G. Konheim, M.~H. McAndrew, Topological entropy, Transactions
  of the American Mathematical Society 114 (1965) 309--319.

\bibitem{Milnor02}
J.~Milnor, Is entropy effectively computable?, Semantic Scholar (2002).

\bibitem{GHRS20}
S.~Gangloff, A.~Herrera, C.~Rojas, M.~Sablik, Computability of topological
  entropy: From general systems to transformations on cantor sets and the
  interval, Discrete and Continuous Dynamical Systems 40~(7) (2020) 4259--4286.
\newblock \href {https://doi.org/10.3934/dcds.2020180}
  {\path{doi:10.3934/dcds.2020180}}.

\bibitem{ROSSLER1976397}
O.~{R}\"ossler, An equation for continuous chaos, Physics Letters A 57~(5)
  (1976) 397--398.
\newblock \href {https://doi.org/https://doi.org/10.1016/0375-9601(76)90101-8}
  {\path{doi:https://doi.org/10.1016/0375-9601(76)90101-8}}.

\bibitem{BBS09}
R.~Barrio, F.~Blesa, S.~Serrano, Qualitative analysis of the {R}\"ossler
  equations: {B}ifurcations of limit cycles and chaotic attractors, Physica D:
  Nonlinear Phenomena 238~(13) (2009) 1087--1100.
\newblock \href {https://doi.org/https://doi.org/10.1016/j.physd.2009.03.010}
  {\path{doi:https://doi.org/10.1016/j.physd.2009.03.010}}.

\bibitem{BBS14}
R.~Barrio, F.~Blesa, S.~Serrano, Unbounded dynamics in dissipative flows:
  {R}\"ossler model, Chaos: An Interdisciplinary Journal of Nonlinear Science
  24~(2) (2014) 024407.
\newblock \href {https://doi.org/10.1063/1.4871712}
  {\path{doi:10.1063/1.4871712}}.

\bibitem{BBSS11}
R.~Barrio, F.~Blesa, S.~Serrano, A.~Shilnikov, Global organization of spiral
  structures in biparameter space of dissipative systems with {S}hilnikov
  saddle-foci, Phys. Rev. E 84 (2011) 035201.

\bibitem{BBS12}
R.~Barrio, F.~Blesa, S.~Serrano, Topological changes in periodicity hubs of
  dissipative systems, Phys. Rev. Lett. 108 (2012) 214102.
\newblock \href {https://doi.org/10.1103/PhysRevLett.108.214102}
  {\path{doi:10.1103/PhysRevLett.108.214102}}.

\bibitem{Z}
P.~Zgliczy\'nski, Computer assisted proof of chaos in the {R}\"ossler equations
  and in the {H}\'enon map, Nonlinearity 10~(1) (1997) 243--252.
\newblock \href {https://doi.org/10.1088/0951-7715/10/1/016}
  {\path{doi:10.1088/0951-7715/10/1/016}}.

\bibitem{Neumaier_1991}
A.~Neumaier, Interval Methods for Systems of Equations, Encyclopedia of
  Mathematics and its Applications, Cambridge University Press, 1991.

\bibitem{Moore}
R.~E. Moore, R.~B. Kearfott, M.~J. Cloud,
  \href{https://epubs.siam.org/doi/abs/10.1137/1.9780898717716}{Introduction to
  Interval Analysis}, Society for Industrial and Applied Mathematics, 2009.
\newblock \href
  {http://arxiv.org/abs/https://epubs.siam.org/doi/pdf/10.1137/1.9780898717716}
  {\path{arXiv:https://epubs.siam.org/doi/pdf/10.1137/1.9780898717716}}, \href
  {https://doi.org/10.1137/1.9780898717716}
  {\path{doi:10.1137/1.9780898717716}}.
\newline\urlprefix\url{https://epubs.siam.org/doi/abs/10.1137/1.9780898717716}

\bibitem{TuckerBook}
W.~Tucker, Validated Numerics: A Short Introduction to Rigorous Computations,
  Princeton University Press, USA, 2011.

\bibitem{NedialkovJackson1998}
N.~S. Nedialkov, K.~R. Jackson, An interval hermite-obreschkoff method for
  computing rigorous bounds on the solution of an initial value problem for an
  ordinary differential equation, Developments in Reliable Computing 5 (1998)
  289--310.

\bibitem{NedialkovJacksonCorliss1999}
N.~Nedialkov, K.~Jackson, G.~Corliss, Validated solutions of initial value
  problems for ordinary differential equations, Applied Mathematics and
  Computation 105~(1) (1999) 21 -- 68.
\newblock \href
  {https://doi.org/http://dx.doi.org/10.1016/S0096-3003(98)10083-8}
  {\path{doi:http://dx.doi.org/10.1016/S0096-3003(98)10083-8}}.

\bibitem{NedialkovJacksonPryce2001}
N.~S. Nedialkov, K.~R. Jackson, J.~D. Pryce, An effective high-order interval
  method for validating existence and uniqueness of the solution of an ivp for
  an ode, Reliable Computing 7~(6) (2001) 449--465.
\newblock \href {https://doi.org/10.1023/A:1014798618404}
  {\path{doi:10.1023/A:1014798618404}}.

\bibitem{Nedialkov2006}
N.~S. Nedialkov, Vnode-lp: A validated solver for initial value problems in
  ordinary differential equations, Tech. Rep. Technical Report CAS-06-06-NN
  (2006).

\bibitem{RauhBrillGunter2009}
A.~Rauh, M.~Brill, C.~G\"{u}Nther, A novel interval arithmetic approach for
  solving differential-algebraic equations with valencia-ivp, Int. J. Appl.
  Math. Comput. Sci. 19~(3) (2009) 381--397.
\newblock \href {https://doi.org/10.2478/v10006-009-0032-4}
  {\path{doi:10.2478/v10006-009-0032-4}}.

\bibitem{capd}
T.~Kapela, M.~Mrozek, D.~Wilczak, P.~Zgliczyński, {CAPD::DynSys: A flexible
  C++} toolbox for rigorous numerical analysis of dynamical systems,
  Communications in Nonlinear Science and Numerical Simulation 101 (2021)
  105578.
\newblock \href {https://doi.org/https://doi.org/10.1016/j.cnsns.2020.105578}
  {\path{doi:https://doi.org/10.1016/j.cnsns.2020.105578}}.

\bibitem{C1Lohner}
P.~Zgliczy\'nski, {$\mathcal C^1$}-{L}ohner algorithm, Foundations of
  Computational Mathematics 2 (2002) 429--465.

\bibitem{C1HO}
I.~Walawska, D.~Wilczak, An implicit algorithm for validated enclosures of the
  solutions to variational equations for {ODE}s, Applied Mathematics and
  Computation 291 (2016) 303--322.
\newblock \href {https://doi.org/https://doi.org/10.1016/j.amc.2016.07.005}
  {\path{doi:https://doi.org/10.1016/j.amc.2016.07.005}}.

\bibitem{CnLohner}
D.~Wilczak, P.~Zgliczy\'nski, {$\mathcal C^r$}-{L}ohner algorithm, Schedae
  Informaticae 20 (2011) 9--46.

\bibitem{poincare}
T.~Kapela, D.~Wilczak, P.~Zgliczyński, Recent advances in a rigorous
  computation of {P}oincar\'e maps, Communications in Nonlinear Science and
  Numerical Simulation 110 (2022) 106366.
\newblock \href {https://doi.org/https://doi.org/10.1016/j.cnsns.2022.106366}
  {\path{doi:https://doi.org/10.1016/j.cnsns.2022.106366}}.

\bibitem{CAPINSKI2023106998}
M.~J. Capiński, J.~D. {Mireles James}, W.~Tucker, D.~Wilczak, J.~B. {van den
  Berg}, Computer assisted proofs in dynamical systems, Communications in
  Nonlinear Science and Numerical Simulation 118 (2023) 106998.
\newblock \href {https://doi.org/https://doi.org/10.1016/j.cnsns.2022.106998}
  {\path{doi:https://doi.org/10.1016/j.cnsns.2022.106998}}.

\bibitem{CapinskiFleurantinJames2020}
M.~J. Capi\'{n}ski, E.~Fleurantin, J.~D.~M. James, Computer assisted proofs of
  two-dimensional attracting invariant tori for {ODE}s, Discrete and Continuous
  Dynamical Systems - A 40 (2020) 6681.
\newblock \href {https://doi.org/10.3934/dcds.2020162}
  {\path{doi:10.3934/dcds.2020162}}.

\bibitem{BARTHA2015339}
F.~A. Bartha, W.~Tucker,
  \href{http://www.sciencedirect.com/science/article/pii/S0096300315007067}{Fixed
  points of a destabilized {K}uramoto--{S}ivashinsky equation}, Applied
  Mathematics and Computation 266 (2015) 339 -- 349.
\newblock \href {https://doi.org/https://doi.org/10.1016/j.amc.2015.05.082}
  {\path{doi:https://doi.org/10.1016/j.amc.2015.05.082}}.
\newline\urlprefix\url{http://www.sciencedirect.com/science/article/pii/S0096300315007067}

\bibitem{Capinski2012}
M.~J. Capi\'nski, Computer assisted existence proofs of {L}yapunov orbits at
  {L}2 and transversal intersections of invariant manifolds in the
  {J}upiter-{S}un {PCR3BP}, SIAM J. Applied Dynamical Systems 11~(4) (2012)
  1723--1753.

\bibitem{CyrankaWanner}
J.~Cyranka, T.~Wanner, Computer-assisted proof of heteroclinic connections in
  the one-dimensional {O}hta--{K}awasaki model, SIAM Journal on Applied
  Dynamical Systems 17~(1) (2018) 694--731.
\newblock \href {https://doi.org/10.1137/17M111938X}
  {\path{doi:10.1137/17M111938X}}.

\bibitem{GalanteKaloshin}
J.~Galante, V.~Kaloshin, Destruction of invariant curves in the restricted
  circular planar three body problem using comparison of action, Duke Math. J.
  159~(2) (2011) 275–327.

\bibitem{cadiot2024rigorous}
M.~Cadiot, J.-P. Lessard, J.-C. Nave, Rigorous computation of solutions of
  semilinear pdes on unbounded domains via spectral methods, SIAM Journal on
  Applied Dynamical Systems 23~(3) (2024) 1966--2017.

\bibitem{DLT}
J.-P.~L. Gabriel William~Duchesne, A.~Takayasu, A rigorous integrator and
  global existence for higher-dimensional semilinear parabolic pdes via
  semigroup theory, Journal of Scientific Computing 62~(102) (2025).

\bibitem{BARRIO201280}
R.~Barrio, M.~Rodr\'{\i}guez, F.~Blesa, Computer-assisted proof of skeletons of
  periodic orbits, Computer Physics Communications 183~(1) (2012) 80--85.
\newblock \href {https://doi.org/https://doi.org/10.1016/j.cpc.2011.09.001}
  {\path{doi:https://doi.org/10.1016/j.cpc.2011.09.001}}.

\bibitem{WSB16}
D.~Wilczak, S.~Serrano, R.~Barrio, Coexistence and dynamical connections
  between hyperchaos and chaos in the 4d {R}\"ossler system: A
  computer-assisted proof, SIAM Journal on Applied Dynamical Systems 15~(1)
  (2016) 356--390.
\newblock \href {https://doi.org/10.1137/15M1039201}
  {\path{doi:10.1137/15M1039201}}.

\bibitem{Tucker2002}
W.~Tucker, A rigorous {ODE} solver and {S}male's 14th problem, Found. Comput.
  Math. 2~(1) (2002) 53--117.

\bibitem{GoraBoyarsky}
P.~Góra, A.~Boyarsky, Computing the topological entropy of general
  one-dimensional maps, Transactions of the American Mathematical Society
  323~(1) (1991) 39--49.

\bibitem{Milnor88}
J.~Milnor, W.~Thurston, On iterated maps of the interval, in: J.~C. Alexander
  (Ed.), Dynamical Systems, Springer Berlin Heidelberg, Berlin, Heidelberg,
  1988, pp. 465--563.

\bibitem{W85}
A.~Wolf, J.~B. Swift, H.~L. Swinney, J.~A. Vastano, Determining {L}yapunov
  exponents from a time series, Physica D: Nonlinear Phenomena 16~(3) (1985)
  285--317.

\bibitem{AUTO1}
E.~Doedel, A{UTO}: a program for the automatic bifurcation analysis of
  autonomous systems, Congr. Numer. 30 (1981) 265--284.

\bibitem{AUTO2}
E.~J. Doedel, R.~Paffenroth, A.~R. Champneys, T.~F. Fairgrieve, Y.~A.
  Kuznetsov, B.~E. Oldeman, B.~Sandstede, X.~J. Wang, Auto2000,
  http://cmvl.cs.concordia.ca/auto.

\bibitem{BW83}
J.~S. Birman, R.~Williams, Knotted periodic orbits in dynamical systems—i:
  {L}orenz's equation, Topology 22~(1) (1983) 47--82.

\bibitem{LDM95}
C.~Letellier, P.~Dutertre, B.~Maheu, Unstable periodic orbits and templates of
  the {R}\"ossler system: {T}oward a systematic topological characterization,
  Chaos: An Interdisciplinary Journal of Nonlinear Science 5~(1) (1995)
  271--282.

\bibitem{BLBD98}
G.~Boulant, M.~Lefranc, S.~Bielawski, D.~Derozier, A nonhorseshoe template in a
  chaotic laser model, International Journal of Bifurcation and Chaos 08~(05)
  (1998) 965--975.

\bibitem{UM10}
J.~Used, J.~C. Mart\'{\i}n, Multiple topological structures of chaotic
  attractors ruling the emission of a driven laser, Phys. Rev. E 82 (2010)
  016218.

\bibitem{SMB21}
S.~Serrano, M.~A. Mart\'{\i}nez, R.~Barrio, Order in chaos: Structure of
  chaotic invariant sets of square-wave neuron models, Chaos: An
  Interdisciplinary Journal of Nonlinear Science 31~(4) (2021) 043108.

\bibitem{G98}
R.~Gilmore, Topological analysis of chaotic dynamical systems, Rev. Mod. Phys.
  70 (1998) 1455--1529.
\newblock \href {https://doi.org/10.1103/RevModPhys.70.1455}
  {\path{doi:10.1103/RevModPhys.70.1455}}.

\bibitem{GL02}
R.~Gilmore, M.~Lefranc, The topology of chaos, Wiley-Interscience [John Wiley
  \& Sons], New York, 2002.

\bibitem{KG85}
H.~Kantz, P.~Grassberger, Repellers, semi-attractors, and long-lived chaotic
  transients, Physica D: Nonlinear Phenomena 17~(1) (1985) 75--86.
\newblock \href {https://doi.org/https://doi.org/10.1016/0167-2789(85)90135-6}
  {\path{doi:https://doi.org/10.1016/0167-2789(85)90135-6}}.

\bibitem{GH}
J.~Guckenheimer, P.~Holmes, Nonlinear Oscillations, Dynamical Systems, and
  Bifurcations of Vector Fields, Springer, Berlin, 1983.

\bibitem{Katok_Hasselblatt_1995}
A.~Katok, B.~Hasselblatt, Introduction to the Modern Theory of Dynamical
  Systems, Encyclopedia of Mathematics and its Applications, Cambridge
  University Press, 1995.

\bibitem{github}
D.~Wilczak, Supplementary material -- repository of the {C}++ source code,
  \texttt{https://github.com/dbwilczak/entropy-growth}.

\bibitem{WalawskaWilczak2019}
I.~Walawska, D.~Wilczak,
  \href{http://www.sciencedirect.com/science/article/pii/S1007570419300735}{Validated
  numerics for period-tupling and touch-and-go bifurcations of symmetric
  periodic orbits in reversible systems}, Communications in Nonlinear Science
  and Numerical Simulation 74 (2019) 30 -- 54.
\newblock \href {https://doi.org/https://doi.org/10.1016/j.cnsns.2019.03.005}
  {\path{doi:https://doi.org/10.1016/j.cnsns.2019.03.005}}.
\newline\urlprefix\url{http://www.sciencedirect.com/science/article/pii/S1007570419300735}

\bibitem{ZGi}
P.~Zgliczy\'nski, M.~Gidea, Covering relations for multidimensional dynamical
  systems, Journal of Differential Equations 202~(1) (2004) 32--58.
\newblock \href {https://doi.org/https://doi.org/10.1016/j.jde.2004.03.013}
  {\path{doi:https://doi.org/10.1016/j.jde.2004.03.013}}.

\bibitem{E}
R.~W. Easton, Isolating blocks and symbolic dynamics, Journal of Differential
  Equations 17~(1) (1975) 96--118.
\newblock \href {https://doi.org/https://doi.org/10.1016/0022-0396(75)90037-6}
  {\path{doi:https://doi.org/10.1016/0022-0396(75)90037-6}}.

\end{thebibliography}

\end{document}